\titleformat{\section}[block]{\large\center\sc}{\arabic{section}}{0.5em}{}[] 
\definecolor{teal}{RGB}{0,128,172}
\definecolor{pur}{RGB}{224,104,255}
\theoremstyle{plain}
\newtheorem{theorem}{Theorem}[section]
\newtheorem{lemma}[theorem]{Lemma}
\newtheorem{corollary}[theorem]{Corollary}
\newtheorem{proposition}[theorem]{Proposition}
\newtheorem{remark}[theorem]{Remark}
\let\oldsection\section
\renewcommand\section{\setcounter{equation}{0}\oldsection}
\def\be{\begin{equation}}
\def\ee{\end{equation}}
\def\bes{\begin{equation*}}
\def\ees{\end{equation*}}
\def\bs{\begin{split}}
\def\es{\end{split}}
\def\bali{\begin{aligned}}
\def\eali{\end{aligned}}
\def\bR{{\mathbb R}}
\def\un{\underbrace}
\def\al{\alpha}
\def\th{\theta}
\def\Dl{\Delta}
\def\lt{\left}
\def\rt{\right}
\def\i{\infty}
\def\p{\partial}
\def\f{\frac}
\def\na{\nabla}
\def\q{\quad}
\def\bl{\boldsymbol}
\def\mR{\mathbb{R}}
\def\mH{\mathcal{H}}
\def\mD{\mathcal{D}}
\def\cd{\cdot}
\def\les{\lesssim}
\def\mf{\mathfrak}
\begin{document}
\title{\bf\Large  On local well-posedness of 3D ideal Hall-MHD system with an azimuthal magnetic field}

\author{\normalsize\sc Zijin Li}

\date{}

\maketitle

\begin{abstract}
In this paper, we study the local well-posedness of classical solutions to the ideal Hall-MHD equations whose magnetic field is supposed to be azimuthal in the $L^2$-based Sobolev spaces. By introducing a good unknown coupling with the original unknowns, we overcome difficulties arising from the lack of magnetic resistance, and establish a self-closed $H^m$ with $(3\leq m\in\mathbb{N})$ local energy estimate of the system. Here, a key cancellation related to $\th$ derivatives is discovered. In order to apply this cancellation, part of the high-order energy estimates is performed in the cylindrical coordinate system, even though our solution is not assumed to be axially symmetric.

During the proof, high-order derivative tensors of unknowns in the cylindrical coordinates system are carefully calculated, which would be useful in further researches on related topics.
\medskip

{\sc Keywords:} ideal Hall-MHD, local well-posedness, azimuthal magnetic field

{\sc Mathematical Subject Classification 2020:} 35Q35, 76B03

\end{abstract}

\tableofcontents

\section{Introduction}
We consider the 3D Hall-MHD system
\be\label{Hall}
\left\{\begin{array}{l}
\partial_t \bl{v}+\bl{v} \cdot \nabla \bl{v}+\nabla P-\mu\Dl\bl{v}=\bl{h} \cdot \nabla \bl{h}, \\
\partial_t \bl{h}+\bl{v} \cdot \nabla \bl{h}+\nabla \times((\nabla \times \bl{h}) \times \bl{h})-\nu\Dl\bl{h}=\bl{h} \cdot \nabla \bl{v}, \\
\nabla \cdot \bl{v}=0,\\
\nabla \cdot \bl{h}=0, \\
\end{array}\right.\q\text{for}\q (t,x)\in\mathbb{R}_+\times\mathbb{R}^3\,,
\ee
with initial data
\be\label{INIT}
\bl{v}(0,x)=\bl{v}_0(x),\q\text{and}\q\bl{h}(0,x)=\bl{h}_0(x)\q\q\text{for}\q x\in\mathbb{R}^3.
\ee
Here, divergence-free three dimensional vector fields $\bl{v}=(v_1, v_2, v_3)$ and $\bl{h}=(h_1, h_2, h_3)$ represent the velocity field and magnetic field, respectively. $P\in\bR$ represents the scalar pressure. $\mu\geq0$ is the fluid viscosity, while $\nu\geq0$ stands for the magnetic resistance.

When $\mu=\nu=0$, system \eqref{Hall} is called the ideal Hall-MHD system. By the influence of the Hall-effect (which is represented as the 2nd-order nonlinear structure $\nabla \times((\nabla \times \bl{h}) \times \bl{h})$ in the magnetic equation), together with the lack of compensating by the resistivity $\Dl\bl{h}$, the loss of derivative seems inevitable. Recently, various ill-posedness results of Hall- and electron-MHD systems were shown in Jeong-Oh \cite{Jeong2021APDE}.

In this paper, we consider the ideal Hall-MHD system whose magnetic field is azimuthal, that is, $\bl{h}$ has the following form of expression:
\be\label{Azi}
\bl{h}=h_\theta(t, r, z,\theta) \bl{e_\theta}\,.
\ee
Here $h_\theta$ is a scalar function, while $\bl{e_\theta}$ is the unit vector of the horizontal swirl direction in the following cylindrical basis:
\[
\bl{e_r}=\big(\frac{x_1}{r}, \frac{x_2}{r}, 0\big), \quad \bl{e_\theta}=\big(-\frac{x_2}{r}, \frac{x_1}{r}, 0\big), \quad \bl{e_z}=(0,0,1),
\]
and
$$
r=\sqrt{x_1^2+x_2^2}, \quad \theta=\arctan \frac{x_2}{x_1}, \quad z=x_3\,.
$$

Since $\bl{h}$ is supposed to be azimuthal, its divergence field can be simply written:
\[
\na\cd\bl{h}=\f{1}{r}\p_\theta h_\theta\,.
\]
Thus the divergence free property of the magnetic field implies $h_\theta$ is independent with $\theta$. Rewriting \eqref{Hall} with $\mu=\nu=0$ in the aforementioned cylindrical coordinates, one derives
\be\label{(2.1)}
\left\{\begin{split}
&\p_tv_r+\Big(v_r \partial_r+\frac{1}{r} v_\theta \partial_\theta+v_z \partial_z\Big) v_r-\frac{v_\theta^2}{r}+\frac{2}{r^2} \partial_\theta v_\theta+\partial_r P=-\frac{h_\theta^2}{r}, \\[1mm]
&\p_tv_\theta+\Big(v_r \partial_r+\frac{1}{r} v_\theta \partial_\theta+v_z \partial_z\Big) v_\theta+\frac{v_\theta v_r}{r}-\frac{2}{r^2} \partial_\theta v_r+\frac{1}{r} \partial_\theta P=0, \\[1mm]
&\p_tv_z+\Big(v_r \partial_r+\frac{1}{r} v_\theta \partial_\theta+v_z \partial_z\Big) v_z+\partial_z P=0, \\[1mm]
&\p_th_\theta+\left(v_r \partial_r+v_z \partial_z\right) h_\theta-\frac{1}{r} h_\theta \partial_\theta v_\theta-\frac{h_\theta v_r}{r}=\f{\p_zh_\theta^2}{r}, \\[1mm]
&\nabla \cdot \bl{v}=\partial_r v_r+\frac{v_r}{r}+\frac{1}{r} \partial_\theta v_\theta+\partial_z v_z=0.\\[1mm]
\end{split}\right.
\ee
Where
\[
\bl{v}=v_r(t,r,z,\theta)\bl{e_r}+v_\theta(t,r,z,\theta)\bl{e_\theta}+v_z(t,r,z,\theta)\bl{e_z}\,.
\]

Various physical phenomena and examples are related to conductive fluid flows with azimuthal magnetic fields. Physicist often uses a powerful azimuthal magnetic field to confine plasma in the shape of a torus, such as the famous \emph{tokamak device} that producing controlled thermonuclear fusion power. Meanwhile, the magnetic field intuited by a current $I$ in the straight electric wire is azimuthal, which is given by
\[
\bl{h}=\f{\mu_0I}{2\pi r}\bl{e_\theta}\,.
\]
Here $\mu_0$ is the permeability of free space.
\begin{figure}
\centering
\includegraphics[scale=0.3]{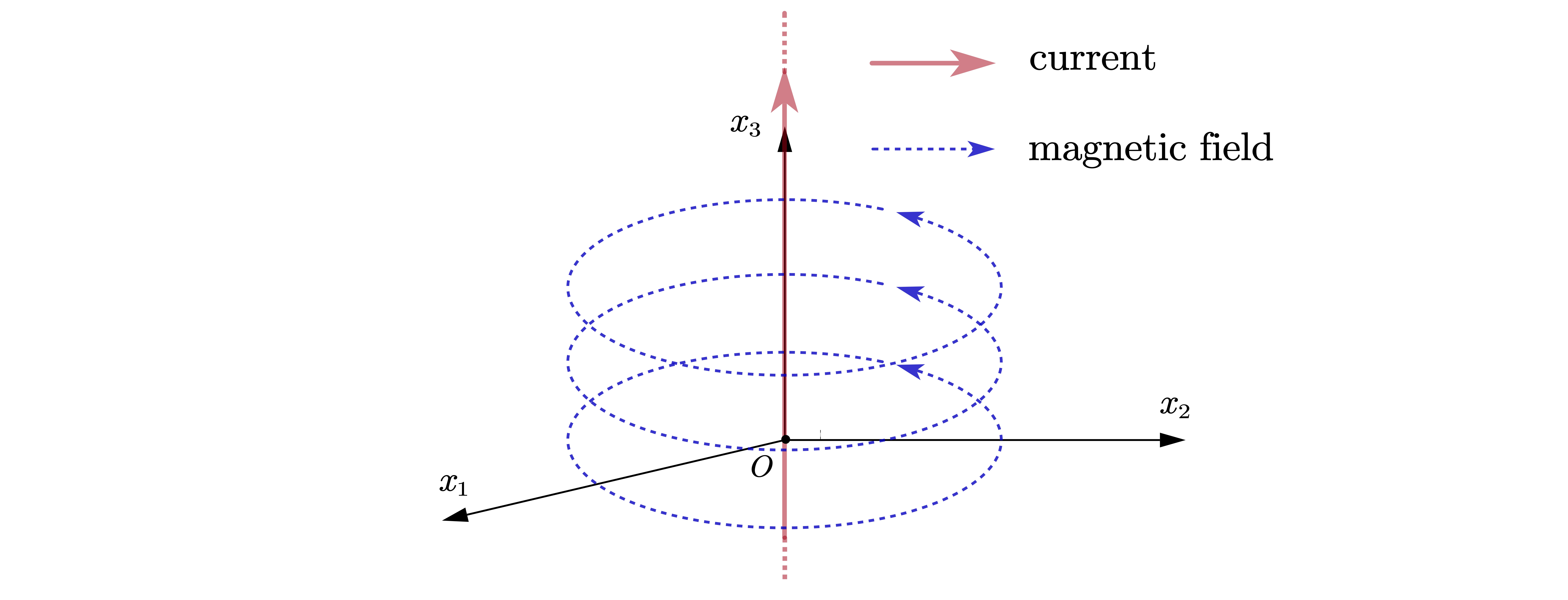}
\caption{The azimuthal magnetic field intuited by a straight current $I$}
\end{figure}

Before stating the main theorem of the present paper, let us denote a \emph{good unknown} related to the magnetic field: $\mathcal{H}:=\f{h_\theta}{r}$. It will play important role during the proof. From \eqref{(2.1)}$_4$, one deduces
\be\label{EHH}
\p_t\mH+(v_r\p_r+v_z\p_z)\mH-\mH\f{\p_\theta v_\theta}{r}-2\mH\p_z\mH=0\,.
\ee
Denoting $\bl{b}:=v_r\bl{e_r}+v_z\bl{e_z}$, one can rewrite \eqref{EHH} as
\[
\p_t\mH+\bl{b}\cdot\na\mH-\mH\f{\p_\theta v_\theta}{r}-2\mH\p_z\mH=0\,.
\]
Noticing that $\mH$ is independent with $\theta$, the vector field $\bl{b}$ in the above equation could also be replaced by $\bl{v}$.

Here goes the main result of the paper: the local well-posedness of the system \eqref{(2.1)} in $L^2$-based the Sobolev space $H^m(\bR^3)$.

\begin{theorem}
Assume $(\bl{v}_0,\bl{h}_0,\mH_0)\in H^m(\bR^3)$ with $3\leq m\in\mathbb{N}$, and $\bl{h}$ is azimuthal as shown in \eqref{Azi}, then there exists $T_*>0$, depending only on $\|(\bl{v}_0,\bl{h}_0,\mH_0)\|_{H^m}$, such that the system \eqref{Hall}--\eqref{INIT} has a unique strong solution $(\bl{v},\bl{h},\mH)$ on $[0,T_*]\times\mathbb{R}^3$, and it satisfies
\[
(\bl{v},\bl{h},\mH)\in L^\i(0,T_*,H^m(\bR^3))\,.
\]
\end{theorem}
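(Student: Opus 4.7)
My plan is to prove the theorem by the classical scheme of approximation plus uniform a priori $H^m$ energy estimate plus compactness/uniqueness. The pivotal structural observation, highlighted by the author, is that the good unknown $\mH=h_\theta/r$ satisfies the first-order Burgers-type equation \eqref{EHH} rather than a second-order Hall equation, so working with the augmented triple $(\bl{v},\bl{h},\mH)$ circumvents the derivative loss that usually obstructs $H^m$ theory for the ideal Hall-MHD system. For the approximation step I would use Friedrichs mollification of the Cartesian system \eqref{Hall} with $\mu=\nu=0$; the mollifier commutes with rotations about the $z$-axis, so it preserves the azimuthal ansatz $\bl{h}=h_\theta\bl{e_\theta}$ and the divergence-free constraints, and on the regularized system local existence in $H^m$ is immediate. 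Everything then reduces to a uniform energy estimate on
\[
E_m^2(t):=\|\bl{v}(t)\|_{H^m}^2+\|\bl{h}(t)\|_{H^m}^2+\|\mH(t)\|_{H^m}^2.
\]

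For $\bl{v}$ the Cartesian $H^m$ estimate is essentially standard, leaning on the classical MHD cancellation
\[
\int_{\bR^3}\bl{h}\cdot\na\p^\alpha\bl{h}\cdot\p^\alpha\bl{v}\,dx+\int_{\bR^3}\bl{h}\cdot\na\p^\alpha\bl{v}\cdot\p^\alpha\bl{h}\,dx=0
\]
(using $\na\cdot\bl{h}=0$), which removes the top-order Lorentz contributions and leaves commutator remainders bounded by $\|\na(\bl{v},\bl{h})\|_{L^\infty}E_m^2$. For the magnetic equation the Hall term reads $2\mH\p_z h_\theta$; at top order, pairing $\mH\p_z\p^\alpha h_\theta$ with $\p^\alpha h_\theta$ and integrating by parts in $z$ produces $-\tfrac12\int\p_z\mH\,(\p^\alpha h_\theta)^2\,dx$, controlled by $\|\p_z\mH\|_{L^\infty}\|h_\theta\|_{H^m}^2$, while the lower-order commutators close by Moser/Kato--Ponce since $m\geq3$. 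Note that this already forces us to carry $\mH$ in $H^m$, which is precisely what the augmented energy $E_m$ provides.

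The genuinely delicate step, and where I expect the main obstacle to sit, is the $H^m$ estimate for $\mH$ itself. The Burgers term $2\mH\p_z\mH$ is controlled by the same $z$-integration-by-parts trick and the transport $\bl{v}\cdot\na\mH$ contributes only $\|\na\bl{v}\|_{L^\infty}\|\mH\|_{H^m}^2$, but the forcing $\mH\p_\theta v_\theta/r$ is dangerous: differentiating $m$ times in Cartesian variables generates apparent $r^{-k}$ singularities along the axis, and the standard $H^m$ Moser calculus does not see the azimuthal symmetry that should render these harmless. To resolve them I would switch (for this piece only) to the cylindrical basis $\{\p_r,\p_z,r^{-1}\p_\theta\}$ even though $\bl{v}$ is not axisymmetric, carefully compute the high-order tensor expansion of $\p^\alpha$ in this basis, and exploit the $\theta$-differentiated divergence-free identity
\[
\p_rv_r+\f{v_r}{r}+\f{\p_\theta v_\theta}{r}+\p_zv_z=0
\]
to trade the bad $\p_\theta v_\theta/r$ pieces for regular derivatives of $\bl{v}$. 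The crucial structural ingredient is that $\mH$ is independent of $\theta$, so every $\p_\theta$ acting on $\mH$ vanishes; this is the cancellation the abstract advertises, and it is what ultimately decouples the $r^{-k}$ singularities.

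Once $\tfrac{d}{dt}E_m^2\lesssim E_m^3$ holds uniformly in the regularization parameter, a standard continuation argument yields existence on $[0,T_*]$ with $T_*\sim E_m(0)^{-1}$, and weak-strong compactness promotes the limit to a strong solution. Uniqueness I would obtain from a low-regularity ($L^2$-type) stability estimate on the difference of two solutions: the Hall contribution becomes linear in the differences once one solution is placed in $L^\infty_tW^{1,\infty}_x$ (free from the $H^m$ bound via $m\geq3$), and Gronwall closes the argument. The overwhelmingly dominant technical investment is the cylindrical high-order derivative calculus—precisely the bookkeeping the author announces will be carried out in full and advertised as useful for further study.
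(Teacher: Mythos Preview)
Your overall architecture—approximation, the augmented energy $E_m$, Cartesian commutator estimates for $(\bl{v},\bl{h})$, and a separate cylindrical treatment for $\mH$—matches the paper. The genuine gap is in your proposed mechanism for the trouble term $\mH\,\p_\theta v_\theta/r$ in the $\mH$ estimate.

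You plan to substitute $\p_\theta v_\theta/r=-(\p_r v_r+v_r/r+\p_z v_z)$ via the divergence-free constraint and call the result ``regular derivatives of $\bl{v}$''. But this trade does nothing for the derivative count: after $m$ derivatives and pairing with the $m$th derivative of $\mH$, the top-order contribution still carries $m{+}1$ derivatives of $\bl{v}$ (or, after integrating by parts, $m{+}1$ derivatives of $\mH$), and no further cancellation appears. Likewise, your remark that ``every $\p_\theta$ acting on $\mH$ vanishes'' is true but is not the operative cancellation here: the issue is not $\p_\theta$ landing on $\mH$ in a Leibniz expansion, but that Cartesian derivatives of $\mH$ are themselves not axisymmetric (e.g.\ $\p_{x_1}\mH=\cos\theta\,\p_r\mH$), so the $\theta$-structure is not visible at the level of $\na^m\mH$.

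The paper's mechanism is different. It introduces the operators $\mD^{\mathfrak{M}}=\p_z^{m_z}\p_r^{m_r}(\p_r/r)^{m_c}$, which (i) preserve axial symmetry and (ii) span an equivalent $\dot H^m$ norm on axisymmetric scalars (Proposition~\ref{THM3}). Hence $\mD^{\mathfrak{M}}\mH$ is still $\theta$-independent, and the entire trouble term vanishes at the integral level by the identity $\int_{\bR^3}\p_\theta F\cdot G\,dx=0$ for axisymmetric $G$ (equation~\eqref{ENS}):
\[
\int_{\bR^3}\mD^{\mathfrak{M}}\Big(\mH\,\f{\p_\theta v_\theta}{r}\Big)\,\mD^{\mathfrak{M}}\mH\,dx
=\int_{\bR^3}\p_\theta\Big(\mD^{\mathfrak{M}}\big(\mH\,\tfrac{v_\theta}{r}\big)\,\mD^{\mathfrak{M}}\mH\Big)\,dx=0,
\]
with no substitution and no loss. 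The divergence-free identity enters only secondarily, inside the commutator $[\mD^{\mathfrak{M}},\bl{v}\cdot\bar\na]$ (Proposition~\ref{PROP33}), where it converts $\p_r v_r+v_r/r$ into $-(\p_\theta v_\theta/r+\p_z v_z)$; the $\p_\theta$ piece so produced is again killed by the same $\theta$-integral cancellation, and the $\p_z v_z$ piece is genuinely of admissible order. Your cylindrical basis $\{\p_r,\p_z,r^{-1}\p_\theta\}$, restricted to axisymmetric functions, collapses to $\bar\na=(\p_r,\p_z)$ and misses the $(\p_r/r)^{m_c}$ components, so it does not reproduce $\|\na^m\mH\|_{L^2}$; identifying the correct operators $\mD^{\mathfrak{M}}$ is exactly the ``overwhelmingly dominant technical investment'' you anticipated, but its purpose is to make the $\theta$-integral cancellation available, not to set up a divergence-free trade.
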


\qed

The 3D Hall-MHD equations have been mathematically investigated in several works. Existence of global weak solutions was given in \cite{ADFL2011KRM} with $\mu=\nu=1$, and later \cite{CS2013JDE} showed the temporal decay estimate of global weak or strong solutions. Global weak solutions of \eqref{Hall} with both fluid viscosity $(\mu>0)$ and magnetic resistivity $(\nu>0)$, also local classical solutions of \eqref{Hall} with only magnetic resistivity were obtained in \cite{CDL2014AIHPL}. A blow up criterion and small data global existence for classical solutions ($\mu\geq0$ and $\nu>0$) were also given in \cite{CDL2014AIHPL}, and it was later sharpened in \cite{CL2014JDE}. When the initial magnetic field is close to a background magnetic field that satisfying a Diophantine condition, a global well-posedness for small solutions to the inviscid ($\mu=0$ and $\nu>0$) Hall-MHD system on $\mathbb{T}^3$ was given in \cite{Zhai2021JMFM}.

The Hall effect, which is described by the nonlinear term with second order derivatives $\na\times((\na\times\bl{h})\times\bl{h})$, creates much more trouble in deriving the well-posedness of the Hall-MHD system even locally. Without the help of the magnetic resistivity, controlling the Hall effect term by the same order as the energy functional and then build a self-closed energy estimate in Sobolev spaces seem impossible without some breaking through idea. Huge efforts have been made in recent years and many partial results were given. If the magnetic diffusion given by a fractional Laplacian operator $(-\Dl)^{\al}$ with $\al>\f{1}{2}$, Chae-Wan-Wu \cite{CWW2015JMFM} showed the local well-posedness. Jeong-Kim-Lee \cite{Jeong2018AMP} showed the local well-posedness and blow-up for the axially symmetric inviscid Hall-MHD system. On the other hand, Chae-Weng \cite{CW2016AIHPL} showed the non-resistive Hall-MHD system is not globally in time well-posed in any Sobolev space $H^s(\bR^3)$ with $s>\f{7}{2}$. Jeong-Oh \cite{Jeong2021APDE} proved various ill-posedness results for the Cauchy problem of the Hall- and electron-MHD system without resistivity, and they also claimed several well-posedness results if the initial magnetic field satisfies certain geometric conditions.

In fact, even for a special case the $\bl{v}\equiv0$, equation \eqref{EHH} degenerates to a one dimensional Burgers' equation:
\[
\p_t\mH-\p_z\mH^2=0\,,
\]
which will certainly generate finite time blow up even for $C^\i_0(\bR^3)$ initial data. In the situation of Hall-MHD systems, these solutions are known as the \emph{KMC waves}, see Kingsep-Mokhov-Chukbar \cite{KMC1984}, also \cite{CG1995}. Apparently, these shock wave solutions only exists when the Hall-effect works.

Nevertheless, if the magnetic field is supposed to be azimuthal, the Hall-effect term $\na\times((\na\times\bl{h})\times\bl{h})$ is simplified to
\[
-\f{\p_zh_\theta^2\bl{e_\theta}}{r},
\]
and the good unknown $\mH=\f{h_\theta}{r}$ is introduced. In this way, we rewrite the magnetic equation as
\[
\partial_t \bl{h}+\bl{v} \cdot \nabla \bl{h}-2\mH\p_z\bl{h}=\bl{h} \cdot \nabla \bl{v}.
\]
This observation motivates us to regard $\mH$ as an novel unknown quantity, running in parallel with $\bl{v}$ and $\bl{h}$, although $\mathcal{H}$ exhibits a one higher order derivative compared to $\bl{h}$ in the sense of scaling. From the perspective of the author, this constitutes a pivotal insight enabling the control of the higher-order Hall-effect term autonomously, without recourse to magnetic resistance assistance.

Recently, with the help of this good unknown $\mH$, Li-Yang \cite{Li-Yang2022} obtained a blow up criterion, which was imposed only on $\p_z\mH$, for the axially symmetric no-swirl non-resistive Hall-MHD system. See also Lei \cite{Lei2015} where the author showed the global well-posendess of strong large solutions to axially symmetric MHD system, by applying the conservation property of $\mH$.

However, if the axial symmetry is discarded, a serious trouble arises due to the appearance of $\f{\mH\p_\theta v_\theta}{r}$ in \eqref{EHH}. More precisely, when carrying out the $m$-order energy estimate, the integral
\[
\int_{\mR^3}\na^m\big(\mH\f{\p_\theta v_\theta}{r}\big)\na^m\mH dx
\]
contains a irresistible $(m+1)$-order derivative of unknowns. Without the help of high-order viscous terms, the loss of one-order derivative seems inevitable, and one cannot close the energy estimate in a functional space that only allows finite order derivatives.

On the other hand, one notices the following fact: Given two functions $f,g\in C_c^\i$, with $g$ is independent with $\theta$, then
\be\label{ENS}
\begin{split}
\int_{\mR^3}\p_\theta fgdx&=\int_{-\i}^\i\int_0^\i\int_0^{2\pi}\f{d}{d\theta}\big[f(r,z,\theta)g(r,z)\big]d\theta rdrdz\\
&=\int_{-\i}^\i\int_0^\i\big[g(r,z)f(r,z,\theta)\Big|_{\theta=0}^{2\pi}\big] rdrdz=0\,.
\end{split}
\ee
Since $\mH$ is independent with $\theta$, it seems hopeful to eliminating this trouble term in this way, and this do work if the $m$th order derivatives are all taken in $x_3-$direction. Unfortunately, this will not work if $\na^m$ consists $x_i-$derivatives, with $i=1,2$. More precisely, due to the following nontrivial commutators:
\[
\begin{split}
\big[\p_{x_1},\f{\p_\theta}{r}\big]&=\sin\theta\f{\p_r}{r}\neq0\,;\\
\big[\p_{x_2},\f{\p_\theta}{r}\big]&=-\cos\theta\f{\p_r}{r}\neq0\,,
\end{split}
\]
it is impossible to write $\na^m\f{\p_\theta v_\theta}{r}$ with the form $\p_\theta F$, where $F$ is a function depending on $v_\theta$. What is more, noticing that
\[
\p_{x_1}\mH=\cos\theta\p_r\mH,
\]
it is clear that in spite that $\mH$ is independent with $\theta$, $\na^m\mH$ is not. Heuristically, to apply the cancelation property \eqref{ENS}, one has to write derivatives of $\mH$ in the cylindrical coordinates. By the Sobolev imbedding, one needs to calculate at least third-order derivatives of $\mH$, together with $\bl{v}$, in the cylindrical coordinates.

However, when performing the $H^m$ $(m\geq3)$ energy estimates of $\mH$ in the cylindrical coordinate system, difficulties arise due to the lack of axial symmetry in $\bl{v}$. For example, when $m=3$, the quantity $\f{\p_rv_z}{r}$ cannot be regarded as a component of the 2nd-order derivative tensor $\na^2\bl{v}$, and neither $\p_r\f{\p_rv_z}{r}$ nor $\p_z\f{\p_rv_z}{r}$ can be considered as a component of the 3rd-order derivative tensor $\na^3\bl{v}$. This creates problems when estimating them. Nevertheless, after a rigorous but tedious calculation, one can prove that:
\[
\begin{split}
\na^2\bl{v}\,:\,\bl{e_z}\otimes\bl{e_\theta}\otimes\bl{e_\theta}&=\f{\p_rv_z}{r}+\f{\p^2_\theta}{r^2}{v}_z\,;\\
\na^3\bl{v}\,:\,\bl{e_z}\otimes\bl{e_r}\otimes\bl{e_\theta}\otimes\bl{e_\theta}&=\p_r\big(\f{\p_rv_z}{r}\big)+\f{\p_\theta}{r^2}\big(\p_\theta\p_r{v}_z-\f{2\p_\theta{v}_z}{r}\big)\,;\\
\na^3\bl{v}\,:\,\bl{e_z}\otimes\bl{e_\theta}\otimes\bl{e_\theta}\otimes\bl{e_z}&=\p_z\big(\f{\p_rv_z}{r}\big)+\f{\p^2_\theta}{r^2}\p_z{v}_z\,,\\
\end{split}
\]
which suggests that:
\[
\begin{split}
\big\|\f{\p_rv_z}{r}+\f{\p^2_\theta}{r^2}{v}_z\big\|_{L^2}&\leq \|\bl{v}\|_{\dot{H}^2}\,;\\
\big\|\p_r\big(\f{\p_rv_z}{r}\big)+\f{\p_\theta}{r^2}\big(\p_\theta\p_r{v}_z-\f{2\p_\theta{v}_z}{r}\big)\big\|_{L^2}&+\big\|\p_z\big(\f{\p_rv_z}{r}\big)+\f{\p^2_\theta}{r^2}\p_z{v}_z\big\|_{L^2}\leq \|\bl{v}\|_{\dot{H}^3}\,.\\
\end{split}
\]
Therefore, by applying the aforementioned ``$\theta-$cancellation" property \eqref{ENS}, one settles this issue. In the current paper, a detailed structure of higher-order derivative tensor of a function in the cylindrical coordinate system will be shown, and this plays as a cornerstone while carrying out local $H^m$ energy estimate for any $3\leq m\in\mathbb{N}$.

The rest of this paper is organized as follows. To derive the energy the higher-order energy estimate of \eqref{(2.1)} in the cylindrical coordinates, some geometric preparation is carried out in Section \ref{PRE}: Detailed structure of the higher-order derivative tensor in the cylindrical coordinates is carefully studied. Also useful lemmas concerning interpolation inequalities and the commutator estimates are listed there. Finally, the main results will be proved in Section \ref{Main}.

At the end of this section, we show a list of notations that will appear throughout the paper:
\begin{itemize}
\item $C_{a,b,...}$ denotes a positive constant depending on $a,\,b,\,...$, which may be different from line to line. Likewise, we use $C_{0,...}$ to denote a constant that also depends on initial data.
\item $A\lesssim B$ means $A\leq CB$, and $A\simeq B$ denotes both $A\lesssim B$ and $B\lesssim A$.
\item $[\mathcal{A},\,\mathcal{B}]=\mathcal{A}\mathcal{B}-\mathcal{B}\mathcal{A}$ denotes the communicator of the operator $\mathcal{A}$ and the operator $\mathcal{B}$.
\item $\na$ stands for the classical gradient operator:
    \[
    \nabla^\mathfrak{H}:=\p_{x_1}^{h_1}\p_{x_2}^{h_2}\p_{x_3}^{h_3}.
    \]
    Here $\mathfrak{H}$ is a 3D multi-index such that $\mathfrak{H}=(h_1,h_2,h_3)$ with $h_1,h_2,h_3\in\mathbb{N}\cup\{0\}$ and $|\mathfrak{H}|=h_1+h_2+h_3$ .
\item $\bar{\na}^\mathfrak{L}$ denotes the axisymmetric gradient operator:
    \[
    \bar{\na}^\mathfrak{L}:=\p_z^{l_z}\p_r^{l_r}.
    \]
    Here $\mathfrak{L}$ is a 2D multi-index such that $\mathfrak{L}=(l_r,l_z)$ with $l_r,l_z\in\mathbb{N}\cup\{0\}$ and $|\mathfrak{L}|=l_r+l_z$ .
\item $\mD^\mathfrak{M}$ denotes the following compound gradient operator in cylindrical coordinates:
    \[
    \mD^\mathfrak{M}:=\p_z^{m_z}\p_r^{m_r}\Big(\f{\p_r}{r}\Big)^{m_c}.
    \]
    Here $\mathfrak{M}$ is a 3D multi-index such that $\mathfrak{M}=(m_c,m_r,m_z)$ with $m_c,m_r,m_z\in\mathbb{N}\cup\{0\}$ and $|\mathfrak{M}|=2m_c+m_r+m_z$ . Also we denote $\bar{\mathfrak{M}}=(m_r,m_z)$, with $|\bar{\mathfrak{M}}|=m_r+m_z$ .

\item We use standard notations for Lebesgue and Sobolev functional spaces in $\mathbb{R}^3$: For $1\leq p\leq\infty$ and $k\in\mathbb{N}$, $L^p$ denotes the Lebesgue space with norm
\[
\|f\|_{L^p}:=
\lt\{
\begin{aligned}
&\Big(\int_{\mathbb{R}^3}|f(x)|^pdx\Big)^{1/p},\quad 1\leq p<\infty,\\
&\mathop{ess sup}_{x\in\mathbb{R}^3}|f(x)|,\quad\quad\quad\quad p=\infty.\\
\end{aligned}
\rt.
\]

\item $H^m$ denotes the usual $L^2$-based Sobolev space with its norm
\[
\begin{split}
\|f\|_{H^m}:=&\sum_{0\leq|\mathfrak{H}|\leq m}\|\nabla^\mathfrak{H} f\|_{L^2}\,.\\
\end{split}
\]
\end{itemize}
\section{Preliminary}\label{PRE}
\subsection{Geometric properties of cylindrical coordinates}
The cylindrical coordinates system $(r,\theta,z)$ is defined by the mapping:
\[
\bl{X}:[0,\i)\times[0,2\pi)\times\mR\,\to\,\mathbb{R}^3\,,\q\q\bl{X}(r,\theta,z)=(r\cos\theta,r\sin\theta,z)\,.
\]
From this, we denote its covariant frame system as:
\[
\left\{
\begin{aligned}
&{\mf{E}}_r:=\p_r\bl{X}=(\cos\theta, \sin\theta, 0)=\bl{e_r};\\[1mm]
&{\mf{E}}_\theta:=\p_\theta\bl{X}=(-{r\sin\theta},r{\cos\theta}, 0)=r\bl{e_\theta};\\[1mm]
&{\mf{E}}_z:=\p_z\bl{X}=(0,0,1)=\bl{e_z}\,.
\end{aligned}
\right.
\]
And its related contravariant frame system is
\[
\left\{
\begin{split}
&{\mf{E}}^r=(\cos\theta, \sin\theta, 0)=\bl{e_r};\\[1mm]
&{\mf{E}}^\theta=(-\f{\sin\theta}{r},\f{\cos\theta}{r}, 0)=\f{\bl{e_\theta}}{r};\\[1mm]
&{\mf{E}}^z=(0,0,1)=\bl{e_z}\,.
\end{split}
\right.
\]
For our further calculations, we first recall the Christoffel symbol $\Gamma_{ij}^k$, which is defined by
\[
\Gamma_{ij}^k={\mf{E}}^i\cdot\p_j{\mf{E}}_k=-{\mf{E}}_i\cdot\p_j{\mf{E}}^k,\q i,j,k\in\{r,\theta,z\}\,,
\]
direct calculation shows
\be\label{CHRI}
\Gamma_{ij}^k=\left\{
\begin{array}{lll}
&-r\q\,\,, &\text{for }\,\,(i,j)=(\theta,\theta)\,\,, k=r\,\,;\\[2mm]
&\f{1}{r}\q\,\,\,\,\,, &\text{for }\,\,(i,j)=(r,\theta)\,\,\text{ or }\,\,(\theta,r),\,\, k=\theta\,\,;\\[2mm]
&0\q\,\,\,\,\,, &\text{otherwise}.
\end{array}
\right.
\ee

Using this, for any $f\,:\,\mathbb{R}^3\to\mathbb{R}$ being smooth enough, its gradient $\na f=(\p_{x_1}f,\p_{x_2}f,\p_{x_3}f)$ can be represented as
\[
\na f=\p_r f{\mf{E}}^r+\p_\theta f{\mf{E}}^\theta+\p_z f{\mf{E}}^z=\p_r f\bl{e_r}+\f{\p_\theta f}{r}\bl{e_\theta}+\p_z f\bl{e_z}\,.
\]
Generally, for any $n\in\mathbb{N}$ and $\iota_i\in\{r,\,\theta,\,z\}$, for $i=1,2,...,n,n+1$,
\be\label{NTHOR}
\begin{split}
(\na^{n+1}f)_{\iota_1,\iota_2,...,\iota_{n+1}}:=&\na^{n+1}f\,:\,{\mf{E}_{\iota_{1}}}\otimes{\mf{E}_{\iota_{2}}}\otimes\cdot\cdot\cdot\otimes{\mf{E}_{\iota_{n+1}}}\\
=&\p_{\iota_{n+1}}(\na^{n}f)_{\iota_1,\iota_2,...,\iota_{n}}-\sum_{s\in\{r,\theta,z\}}\sum_{i=1}^n\Gamma_{\iota_i\iota_{n+1}}^s(\na^nf)_{\iota_1,...,\iota_{i-1},\hat{\iota_i},s,\iota_{i+1},...,\iota_n}\,.
\end{split}
\ee

Meanwhile, if $f=f(r,z)$ be axially symmetric, then
\be\label{1st}
\begin{split}
\na f&=\p_rf{\mf{E}}^r+\p_zf{\mf{E}}^z=\p_rf \bl{e_r}+\p_zf\bl{e_z}\,.
\end{split}
\ee
Below we denote $(\na^2f)_{ij}:=\na^2f\,:\,{\mf{E}}_i\otimes{\mf{E}}_j$, for $i,j\in\{r,\theta,z\}$. Applying \eqref{CHRI} and \eqref{NTHOR}, one derives
\be\label{2nd}
\begin{split}
(\na^2f)_{ij}=&\,\p_j(\p_if)-\sum_{s\in\{r,\theta,z\}}\Gamma_{ij}^s\p_sf\\
=&\,\p_r^2 f\,{\mf{E}}^r\otimes{\mf{E}}^r+\p_z\p_r f\,{\mf{E}}^r\otimes{\mf{E}}^z+r\p_rf\,\,{\mf{E}}^\theta\otimes{\mf{E}}^\theta+\p_r\p_z f\,{\mf{E}}^z\otimes{\mf{E}}^r+\p^2_z f\,{\mf{E}}^z\otimes{\mf{E}}^z\\
=&\,\p_r^2 f\,\bl{{e}}_r\otimes\bl{{e}}_r+\p_z\p_r f\,\bl{{e}}_r\otimes\bl{{e}}_z+\f{\p_r}{r}f\,\,\bl{{e}}_\theta\otimes\bl{{e}}_\theta+\p_r\p_z f\,\bl{{e}}_z\otimes\bl{{e}}_r+\p^2_z f\,\bl{{e}}_z\otimes\bl{{e}}_z\,.
\end{split}
\ee
To calculate derivative tensor of $f$ with arbitrary order, we first show the component of derivative tensor with odd $\theta$ lower indexes must be zero.
\begin{lemma}\label{lemodd}
Given $\iota_i\in\{r,\,\theta,\,z\}$, for $i=1,2,...,n$. Define
\[
\chi_\theta[\bl{e_{\iota_1}},\,\bl{e_{\iota_2}},...,\bl{e_{\iota_n}}]:=\sum_{i=1}^n\bl{e_{\iota_i}}\cdot\bl{e_\theta}\,.
\]
Let $f=f(r,z)$ be an smooth axially symmetric function, then
\[
\na^nf\,:\,\bl{e_{\iota_1}}\otimes\bl{e_{\iota_2}}\otimes\cdot\cdot\cdot\otimes\bl{e_{\iota_n}}\equiv0
\]
if
\[
\chi_\theta[\bl{e_{\iota_1}},\,\bl{e_{\iota_2}},...,\bl{e_{\iota_n}}]\equiv 1\mod 2\,.
\]
\end{lemma}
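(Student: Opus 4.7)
The plan is to prove the lemma by induction on $n$, with a strengthened inductive hypothesis: for each multi-index $(\iota_1,\ldots,\iota_n)\in\{r,\theta,z\}^n$, the scalar component $(\nabla^n f)_{\iota_1,\ldots,\iota_n}$ vanishes when the $\theta$-count $\chi_\theta$ is odd, and is an axially symmetric function of $(r,z)$ when $\chi_\theta$ is even. Although the lemma is stated in the orthonormal basis $\{\bl{e_r},\bl{e_\theta},\bl{e_z}\}$ while \eqref{NTHOR} is written in the covariant basis $\{\mf{E}_r,\mf{E}_\theta,\mf{E}_z\}$, the two bases differ only by a factor of $r$ in each $\theta$-slot, so vanishing of one component is equivalent to vanishing of the other. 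Thus I can work directly with the recursion \eqref{NTHOR}.

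The base case $n=0,1$ is immediate: $f=f(r,z)$ is axisymmetric, $\partial_\theta f=0$, and $\partial_rf$, $\partial_zf$ remain axisymmetric. For the inductive step, I inspect \eqref{NTHOR} and use the explicit structure of the Christoffel symbols in \eqref{CHRI}, namely that $\Gamma_{ij}^s$ is nonzero only for $(i,j,s)\in\{(\theta,\theta,r),(r,\theta,\theta),(\theta,r,\theta)\}$. The key parity observation is that for each nonzero $\Gamma^s_{\iota_i\iota_{n+1}}$ appearing in \eqref{NTHOR}, the $\theta$-count of the modified $n$-tuple $(\iota_1,\ldots,\hat{\iota_i},s,\ldots,\iota_n)$ has the same parity as the $\theta$-count of $(\iota_1,\ldots,\iota_{n+1})$: a direct case check on the three admissible $(i,j,s)$-triples confirms this (the count changes by $0$ or $\pm 2$). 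Hence if the $(n+1)$-tuple has odd $\theta$-count, every correction term in \eqref{NTHOR} involves an $n$-component with odd $\theta$-count, which vanishes by the inductive hypothesis.

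It remains to treat the leading term $\partial_{\iota_{n+1}}(\nabla^n f)_{\iota_1,\ldots,\iota_n}$. If $\iota_{n+1}\in\{r,z\}$, the $\theta$-count is unchanged, so odd parity of the $(n+1)$-tuple forces odd parity of the $n$-tuple and the inductive hypothesis kills the $n$-component. If $\iota_{n+1}=\theta$, the $\theta$-count of the $n$-tuple drops by one and hence has even parity; the inductive hypothesis then asserts $(\nabla^n f)_{\iota_1,\ldots,\iota_n}$ is axisymmetric, so $\partial_\theta$ of it is zero. In either case the leading term vanishes, completing the induction. The even-parity axisymmetry claim propagates along the same recursion since $\partial_r,\partial_z$ preserve axisymmetry, $\partial_\theta$ of an axisymmetric function is zero, and multiplication by $r^{\pm 1}$ keeps the result axisymmetric. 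No serious obstacle is anticipated; the only care required is bookkeeping the parity shifts induced by the three nonzero Christoffel triples.
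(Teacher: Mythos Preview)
Your proposal is correct and follows essentially the same inductive strategy as the paper: both proofs use the recursion \eqref{NTHOR}, the explicit Christoffel symbols \eqref{CHRI}, and track how the $\theta$-parity of the index tuple propagates through the recursion. The only cosmetic difference is that you make the strengthened hypothesis (even $\theta$-count $\Rightarrow$ axisymmetric) explicit from the outset and verify the parity preservation by a single unified check on the three nonzero Christoffel triples, whereas the paper splits into cases $\iota_{n+1}\in\{z,r,\theta\}$ and invokes the axisymmetry claim only where it is needed in Case~III.
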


\begin{proof}
From \eqref{1st} and \eqref{2nd} above, we know Lemma \ref{lemodd} holds for $n=1,2$. Now we prove the result for any $n\in\mathbb{N}$ by induction. Suppose the lemma holds for $n$, and we then consider the case with $n+1$. Indeed, given
\[
(\na^{n+1}f)_{\iota_1,\iota_2,...,\iota_{n+1}}:=\na^{n+1}f\,:\,{\mf{E}_{\iota_{1}}}\otimes{\mf{E}_{\iota_{2}}}\otimes\cdot\cdot\cdot\otimes{\mf{E}_{\iota_{n+1}}}\,,
\]
with
\[
\tilde{\chi}_\theta[{\mf{E}^{\iota_{1}}},\,{\mf{E}^{\iota_{2}}},...,{\mf{E}^{\iota_{n+1}}}]:=\sum_{i=1}^{n+1}{\mf{E}^{\iota_{i}}}\cdot{\mf{E}^\theta}
\]
being odd. Noticing that
\[
(\na^{n+1}f)_{\iota_1,\iota_2,...,\iota_{n+1}}=\p_{\iota_{n+1}}(\na^{n}f)_{\iota_1,\iota_2,...,\iota_{n}}-\sum_{s\in\{r,\theta,z\}}\sum_{i=1}^n\Gamma_{\iota_i\iota_{n+1}}^s(\na^nf)_{\iota_1,...,\iota_{i-1},\hat{\iota_i},s,\iota_{i+1},...,\iota_n}\,,
\]
we split our proof into the following three cases:\\[1mm]

\noindent\textbf{Case I: $\iota_{n+1}=z$.}

In this case, since $\Gamma_{\iota_iz}^s\equiv0$, we have
\[
(\na^{n+1}f)_{\iota_1,\iota_2,...,\iota_n,z}=\p_{z}(\na^{n}f)_{\iota_1,\iota_2,...,\iota_{n}}\,,
\]
and
\[
\tilde{\chi}_\theta[{\mf{E}^{\iota_{1}}},\,{\mf{E}^{\iota_{2}}},...\,,{\mf{E}^{\iota_{n}}}]=\tilde{\chi}_\theta[{\mf{E}^{\iota_{1}}},\,{\mf{E}^{\iota_{2}}},...\,,{\mf{E}^{\iota_{n+1}}}]\equiv 1\mod 2\,.
\]
Thus the Lemma is proved since we assume it holds for the $n$th order.\\[1mm]

\noindent\textbf{Case II: $\iota_{n+1}=r$.}

In this case, we write
\be\label{CAII}
\begin{split}
(\na^{n+1}f)_{\iota_1,\iota_2,...,\iota_{n},r}&=\p_{r}(\na^{n}f)_{\iota_1,\iota_2,...,\iota_{n}}-\sum_{s\in\{r,\theta,z\}}\sum_{i=1}^n\Gamma_{\iota_ir}^s(\na^nf)_{\iota_1,...,\iota_{i-1},\hat{\iota_i},s,\iota_{i+1},...,\iota_n}\\
&=\p_{r}(\na^{n}f)_{\iota_1,\iota_2,...,\iota_{n}}-\sum_{\iota_i=\theta}\Gamma_{\theta r}^\theta(\na^nf)_{\iota_1,...,\iota_{i-1},\hat{\theta},\theta,\iota_{i+1},...,\iota_n}\,.
\end{split}
\ee
Here the second equation follows because $\Gamma_{\iota_ir}^s\equiv0$, if $(s,\iota_i)\neq(\theta,\theta)$. Noticing
\[
\tilde{\chi}_\theta[{\mf{E}^{\iota_{1}}},...\,,{\mf{E}^{\iota_{i-1}}},\hat{{\mf{E}^{\theta}}},{\mf{E}^{\theta}},...\,,{\mf{E}^{\iota_{i+1}}},...\,,{\mf{E}^{\iota_{n}}}]=\tilde{\chi}_\theta[{\mf{E}^{\iota_{1}}},\,{\mf{E}^{\iota_{2}}},...\,,{\mf{E}^{\iota_{n}}}]\equiv1\mod 2\,,
\]
terms $(\na^nf)_{\iota_1,...,\iota_{i-1},\hat{\theta},\theta,\iota_{i+1},...,\iota_n}$ in the far right of \eqref{CAII} all vanish. This case is also proved by induction.\\[1mm]

\noindent\textbf{Case III: $\iota_{n+1}=\theta$.}

First, by induction, it is clear that $(\na^{n}f)_{\iota_1,\iota_2,...,\iota_{n}}$ is independent with $\theta$. This indicates
\be\label{thest}
\begin{split}
(\na^{n+1}f)_{\iota_1,\iota_2,...,\iota_{n},\theta}&=-\sum_{s\in\{r,\theta,z\}}\sum_{i=1}^n\Gamma_{\iota_i\theta}^s(\na^nf)_{\iota_1,...,\iota_{i-1},\hat{\iota_i},s,\iota_{i+1},...,\iota_n}\\
&=-\sum_{\iota_i=\theta}\Gamma_{\theta \theta}^r(\na^nf)_{\iota_1,...,\iota_{i-1},\hat{\theta},r,\iota_{i+1},...,\iota_n}-\sum_{\iota_i=r}\Gamma_{r\theta }^\theta(\na^nf)_{\iota_1,...,\iota_{i-1},\hat{r},\theta,\iota_{i+1},...,\iota_n}\,.
\end{split}
\ee
Here the second equation holds due to $\Gamma_{\iota_i\theta}^s\neq0$ only if $(s,\iota_i)=(r,\theta)$ or $(s,\iota_i)=(\theta,r)$. Observing that
\[
\tilde{\chi}_\theta[{\mf{E}^{\iota_{1}}},...\,,{\mf{E}^{\iota_{i-1}}},\hat{{\mf{E}^{\theta}}},{\mf{E}^{r}},...\,,{\mf{E}^{\iota_{i+1}}},...\,,{\mf{E}^{\iota_{n}}}]=\tilde{\chi}_\theta[{\mf{E}^{\iota_{1}}},\,{\mf{E}^{\iota_{2}}},...\,,{\mf{E}^{\iota_{n}}},\,{\mf{E}^{\theta}}]-2\,,
\]
and
\[
\tilde{\chi}_\theta[{\mf{E}^{\iota_{1}}},...\,,{\mf{E}^{\iota_{i-1}}},\hat{{\mf{E}^{r}}},{\mf{E}^{\theta}},...\,,{\mf{E}^{\iota_{i+1}}},...\,,{\mf{E}^{\iota_{n}}}]=\tilde{\chi}_\theta[{\mf{E}^{\iota_{1}}},\,{\mf{E}^{\iota_{2}}},...\,,{\mf{E}^{\iota_{n}}}]\,,
\]
one has both 
\[
\tilde{\chi}_\theta[{\mf{E}^{\iota_{1}}},...\,,{\mf{E}^{\iota_{i-1}}},\hat{{\mf{E}^{\theta}}},{\mf{E}^{r}},...\,,{\mf{E}^{\iota_{i+1}}},...\,,{\mf{E}^{\iota_{n}}}]
\] 
and 
\[
\tilde{\chi}_\theta[{\mf{E}^{\iota_{1}}},...\,,{\mf{E}^{\iota_{i-1}}},\hat{{\mf{E}^{r}}},{\mf{E}^{\theta}},...\,,{\mf{E}^{\iota_{i+1}}},...\,,{\mf{E}^{\iota_{n}}}]
\] 
are odd. By induction, all terms on far right of \eqref{thest} vanish. This finishes the proof of the Lemma.
\end{proof}

Since $\mathbb{R}^3$ is a flat manifold, the component $(\na^{n}f)_{\iota_1,\iota_2,...,\iota_{n}}$ is independent with the order of its lower indexes. With the help of Lemma \ref{lemodd}, given $f\in C^n(\mathbb{R}^3)$ be an axially symmetric scalar function, let $m_c,\,m_r,\,m_z\in\mathbb{N}$ with $2m_c+m_r+m_z=m$, the following character is well-defined:
\[
(\na^mf)_{2m_c\theta,m_rr,m_zz}:=(\na^mf)\,:\,\un{{\mf{E}_\theta}\otimes\cdot\cdot\cdot\otimes{\mf{E}_\theta}}_{2m_c\,\,\text{times}}\otimes\un{{\mf{E}_r}\otimes\cdot\cdot\cdot\otimes{\mf{E}_r}}_{m_r\,\,\text{times}}\otimes\un{{\mf{E}_z}\otimes\cdot\cdot\cdot\otimes{\mf{E}_z}}_{m_z\,\,\text{times}}\,.
\]
Now we are ready for the characterization of the derivative tensors of a scalar function in the cylindrical coordinates:
\begin{proposition}\label{THM3}
Given $f\in C^m(\mathbb{R}^3)$ be an axially symmetric scalar function. Let $\mathfrak{M}=(m_c,\,m_r,\,m_z)\in(\mathbb{N}\cup\{0\})^3$ be a multi-index and $|\mathfrak{M}|=2m_c+m_r+m_z=m$. Then
\be\label{MM}
\begin{split}
(\na^mf)_{2m_c\theta,m_rr,m_zz}&=(2m_c-1)!!r^{2m_c}\,\p_z^{m_z}\p_r^{m_r}\Big(\f{\p_r}{r}\Big)^{m_c}f\,.
\end{split}
\ee
This indicates that
\[
(\na^mf)\,:\,\un{\bl{e_\theta}\otimes\cdot\cdot\cdot\otimes\bl{e_\theta}}_{2m_c\,\,\text{times}}\otimes\un{\bl{e_r}\otimes\cdot\cdot\cdot\otimes\bl{e_r}}_{m_r\,\,\text{times}}\otimes\un{\bl{e_z}\otimes\cdot\cdot\cdot\otimes\bl{e_z}}_{m_z\,\,\text{times}}=(2m_c-1)!!\mD^{\mathfrak{M}}f\,,
\]
and
\[
|\na^mf|^2\simeq\sum_{|\mathfrak{M}|=m}\Big|\mD^{\mathfrak{M}}f\Big|^2\,.
\]
\end{proposition}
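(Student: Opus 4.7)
My approach is induction on the total order $m = 2m_c + m_r + m_z$. The base cases $m \leq 2$ are supplied directly by \eqref{1st} and \eqref{2nd} (with the convention $(-1)!! = 1$). For the inductive step, I use that the component $(\na^m f)_{\iota_1, \ldots, \iota_m}$ is independent of the ordering of its indices (since $\mathbb{R}^3$ is a flat manifold), so I may place any chosen index in the last slot and apply the recursion \eqref{NTHOR} with the Christoffel table \eqref{CHRI}. The argument splits into three subcases depending on which type of index is peeled off.

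If $m_z \geq 1$, I peel a $z$-slot: since $\Gamma^s_{\iota_i z} \equiv 0$, the recursion collapses to $\p_z (\na^{m-1} f)_{2m_c \theta, m_r r, (m_z - 1) z}$, and the inductive hypothesis closes the step after noting $\p_z$ commutes with $\p_r$ and $\p_r/r$. If $m_z = 0$ and $m_r \geq 1$, I peel an $r$-slot: the sole nonzero Christoffel $\Gamma^\theta_{\theta r} = 1/r$ replaces each of the $2m_c$ $\theta$-indices by itself (and hence does not alter the component), producing
\[
\p_r\bigl[(2m_c - 1)!! \, r^{2m_c} \p_r^{m_r - 1}(\p_r/r)^{m_c} f\bigr] - \f{2m_c}{r} (2m_c - 1)!! \, r^{2m_c} \p_r^{m_r - 1}(\p_r/r)^{m_c} f;
\]
one application of the Leibniz rule cancels the two $r^{2m_c - 1}$ terms, leaving the claimed formula. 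Finally, if $m_z = m_r = 0$ and $m_c \geq 1$, I peel a $\theta$-slot: the $\p_\theta$ term vanishes because the $(2m_c - 1)\theta$-component of $\na^{m-1} f$ is identically zero by Lemma \ref{lemodd}; the complementary Christoffel $\Gamma^\theta_{r\theta}$ never triggers since no $r$-slot is present; only $\Gamma^r_{\theta\theta} = -r$ contributes, with multiplicity $(2m_c - 1)$. Substituting the inductive formula for the multi-index $(m_c - 1, 1, 0)$ and using $(2m_c - 1)(2m_c - 3)!! = (2m_c - 1)!!$ together with the operator identity $r^{2m_c - 1} \p_r = r^{2m_c}(\p_r / r)$ recovers exactly $(2m_c - 1)!! \, r^{2m_c} (\p_r / r)^{m_c} f$.

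Once \eqref{MM} is proved, the orthonormal version follows from $\mf{E}_\theta = r \bl{e_\theta}$, which absorbs the factor $r^{2m_c}$. For the norm equivalence, I expand $|\na^m f|^2$ in the orthonormal basis, drop the vanishing odd-$\theta$ components via Lemma \ref{lemodd}, and count: each surviving pattern $(2m_c, m_r, m_z)$ appears with multinomial multiplicity $\binom{m}{2m_c, m_r, m_z}$ and contributes $[(2m_c - 1)!!]^2 |\mD^\mathfrak{M} f|^2$; since $m$ is fixed, all prefactors are bounded above and below by $m$-dependent constants, giving the stated equivalence. The main obstacle I expect is the bookkeeping in Case B (ensuring the Leibniz-generated $r^{2m_c - 1}$ term cancels the Christoffel correction exactly) and in Case C (matching $(2m_c - 1)(2m_c - 3)!! = (2m_c - 1)!!$ and tracking the shift $r^{2m_c - 1} \p_r = r^{2m_c} (\p_r / r)$); beyond these, the argument is a disciplined but routine induction driven entirely by the structure of the Christoffel symbols in \eqref{CHRI}.
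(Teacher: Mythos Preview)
Your proof is correct and uses essentially the same approach as the paper: both reduce \eqref{MM} via the recursion \eqref{NTHOR} and the Christoffel table \eqref{CHRI}, peeling off $z$-, $r$-, and $\theta$-slots and using the identical cancellation $(\p_r - \tfrac{2m_c}{r})(r^{2m_c} g) = r^{2m_c}\p_r g$. The only difference is organizational---the paper runs a layered induction (first establishing the pure-$\theta$ case \eqref{MM1} by induction on $m_c$, then adding $r$-derivatives via \eqref{MM2}, then $z$-derivatives), whereas you run a single induction on the total order $m$ with case analysis; your Case~C shortcut of invoking the inductive hypothesis directly for the multi-index $(m_c-1,1,0)$ is a minor streamlining of the paper's two-step reduction through \eqref{Lt1}--\eqref{Lt2}.
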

\begin{proof}
We first show
\be\label{MM1}
(\na^{2m_c}f)_{2m_c\theta}=(2m_c-1)!!r^{2m_c}\Big(\f{\p_r}{r}\Big)^{m_c}f\,,
\ee
by induction. The case $m_c=1$
\[
(\na^{2}f)_{\theta\theta}=r^{2}\Big(\f{\p_r}{r}\Big)f
\]
is proved in the equation \eqref{2nd}, and we assume the case of $m_c=m'-1$ also holds. For the case of $m_c=m'$, the identity \eqref{NTHOR} shows
\be\label{Lt1}
(\na^{2m'}f)_{2m'\theta}=-(2m'-1)\Gamma_{\theta\theta}^r(\na^{2m'-1}f)_{(2m'-2)\theta,r}=(2m'-1)r(\na^{2m'-1}f)_{(2m'-2)\theta,r}\,.
\ee
Calculating one more order covariant derivative with $r$, one has
\be\label{Lt2}
(\na^{2m'-1}f)_{(2m'-2)\theta,r}=\p_r(\na^{2m'-2}f)_{(2m'-2)\theta}-(2m'-2)\Gamma_{\theta r}^\theta(\na^{2m'-2}f)_{(2m'-2)\theta}\,.
\ee
Substituting \eqref{Lt2} in \eqref{Lt1}, and noticing we have assumed \eqref{MM1} holds for $m_c=m'-1$, one derives
\[
\begin{split}
(\na^{2m'}f)_{2m'\theta}&=(2m'-1)r\big(\p_r-\f{2m'-2}{r}\big)(\na^{2m'-2}f)_{(2m'-2)\theta}\\
&=(2m'-1)r\big(\p_r-\f{2m'-2}{r}\big)\bigg[(2m'-3)!!r^{2m'-2}\Big(\f{\p_r}{r}\Big)^{m'-1}f\bigg]\\
&=(2m'-1)!!r^{2m'}\Big(\f{\p_r}{r}\Big)^{m'}f\,,
\end{split}
\]
which shows \eqref{MM1} also holds for $m'$-case. This concludes the validity of \eqref{MM1}.

Then, for $1\leq k\leq m_r$, by \eqref{NTHOR}, one notices that
\be\label{MM2}
\begin{split}
(\na^{2m_c+k}f)_{2m_c\theta,\,kr}&=\p_{r}(\na^{2m_c+k-1}f)_{2m_c\theta,\,(k-1)r}-2m_c\Gamma_{\theta r}^\theta(\na^{2m_c+k-1}f)_{2m_c\theta,\,(k-1)r}\\
&=\big(\p_r-\f{2m_c}{r}\big)(\na^{2m_c+k-1}f)_{2m_c\theta,\,(k-1)r}\\
&=...=\big(\p_r-\f{2m_c}{r}\big)^k(\na^{2m_c}f)_{2m_c\theta}\,.
\end{split}
\ee
Since the identity
\[
\big(\p_r-\f{2m_c}{r}\big)(r^{2m_c}g)=r^{2m_c}\p_rg
\]
holds for any smooth function $g$, one derives
\[
(\na^{2m_c+m_r}f)_{2m_c\theta,m_rr}=(2m_c-1)!!r^{2m_c}\,\p_r^{m_r}\Big(\f{\p_r}{r}\Big)^{m_c}f
\]
by applying \eqref{MM1} and \eqref{MM2} and iterating with $k=1,\,2,\,3,\,...,\,m_r$. Finally, \eqref{MM} follows by taking $m_z$-th order $z$ derivatives.
\end{proof}

Here follows a direct corollary of Proposition \ref{THM3}, which represents a component of the derivative tensor of a non-axisymmetric function. Generally speaking, the only difference with the axially symmetric case is an extra term with $\theta$-derivative.

\begin{corollary}\label{COR255}
Given $g\in C^{m}(\mathbb{R}^3)$ be a scalar function. Let $\iota_i\in\{r,z,\theta\}$, for $i=1,2,...,m$, and denote $m_r={\chi}_r[\bl{{e}_{\iota_{1}}},\,\bl{{e}_{\iota_{2}}},...,\bl{{e}_{\iota_{m}}}]$, $m_z={\chi}_z[\bl{{e}_{\iota_{1}}},\,\bl{{e}_{\iota_{2}}},...,\bl{{e}_{\iota_{m}}}]$, $m_c={\chi}_\theta[\bl{{e}_{\iota_{1}}},\,\bl{{e}_{\iota_{2}}},...,\bl{{e}_{\iota_{m}}}]/2$. Then
\[
\begin{split}
&(\na^{m}g)_{\iota_1,\iota_2,...,\iota_{m}}:=\na^{m}g\,:\,{\mf{E}_{\iota_{1}}}\otimes{\mf{E}_{\iota_{2}}}\otimes\cdot\cdot\cdot\otimes{\mf{E}_{\iota_{m}}}\\[1mm]
=&\left\{
\begin{aligned}
&(2m_c-1)!!r^{2m_c}\,\p_z^{m_z}\p_r^{m_r}\Big(\f{\p_r}{r}\Big)^{m_c}g+\p_\theta F_{\iota_1,\iota_2,...,\iota_{m}}(r,z,\theta)\,,&\q\text{if } m_c\text{ is an integer;}\\
&\p_\theta F_{\iota_1,\iota_2,...,\iota_{m}}(r,z,\theta)\,,&\q\text{otherwise.}\\
\end{aligned}
\right.
\end{split}
\]
Here $F_{\iota_1,\iota_2,...,\iota_{m}}(r,z,\theta)$ is a scalar function depending only on its lower indexes. Or equivalently, we write the above equation in the unit coordinate system $\{\bl{e_r},\,\bl{e_\theta},\,\bl{e_z}\}$ :
\[
\begin{split}
&\na^{n}g\,:\,\bl{{e}_{\iota_{1}}}\otimes\bl{{e}_{\iota_{2}}}\otimes\cdot\cdot\cdot\otimes\bl{{e}_{\iota_{m}}}\\
=&\left\{
\begin{aligned}
&(2m_c-1)!!\,\p_z^{m_z}\p_r^{m_r}\Big(\f{\p_r}{r}\Big)^{m_c}g+r^{-2m_c}\p_\theta F_{\iota_1,\iota_2,...,\iota_{m}}(r,z,\theta)\,,&\q\text{if } m_c\text{ is an integer;}\\
&{r^{-2m_c}}{\p_\theta}F_{\iota_1,\iota_2,...,\iota_{m}}(r,z,\theta)\,,&\q\text{otherwise.}\\
\end{aligned}
\right.
\end{split}
\]
\end{corollary}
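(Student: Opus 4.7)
The plan is to reduce the general (non-axisymmetric) case to Proposition \ref{THM3} by decomposing $g$ into its $\theta$-average plus a fluctuation, and then verifying that the fluctuation contributes only a $\partial_\theta F$-term to every component of $\nabla^m g$.

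First I would set
\[
\bar{g}(r,z) := \frac{1}{2\pi}\int_0^{2\pi} g(r,z,\theta')\,d\theta', \qquad \tilde{g}(r,z,\theta):= g-\bar{g}.
\]
Since $\tilde{g}$ is smooth, $2\pi$-periodic in $\theta$, and has zero $\theta$-mean, the primitive $G(r,z,\theta) := \int_0^{\theta}\tilde{g}(r,z,\theta')\,d\theta'$ is itself smooth and $2\pi$-periodic, and satisfies $\tilde{g}=\partial_\theta G$. Thus $g=\bar{g}+\partial_\theta G$, and by linearity of the tensor construction \eqref{NTHOR},
\[
(\nabla^m g)_{\iota_1,\ldots,\iota_m}=(\nabla^m\bar{g})_{\iota_1,\ldots,\iota_m}+(\nabla^m\partial_\theta G)_{\iota_1,\ldots,\iota_m}.
\]
Applying Lemma \ref{lemodd} and Proposition \ref{THM3} to the axisymmetric $\bar{g}$ settles the first summand: it vanishes when $m_c\notin\mathbb{N}$, and equals the explicit expression $(2m_c-1)!!\,r^{2m_c}\,\partial_z^{m_z}\partial_r^{m_r}(\partial_r/r)^{m_c}\bar{g}$ when $m_c\in\mathbb{N}$.

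The central observation for the fluctuation is that the recursion \eqref{NTHOR} builds $(\nabla^{n+1}f)_{\iota_1,\ldots,\iota_{n+1}}$ from $(\nabla^n f)_{\iota_1,\ldots,\iota_n}$ using only the partial derivatives $\partial_r,\partial_\theta,\partial_z$ and multiplication by the Christoffel symbols $\pm r,\pm 1/r$, each of which commutes with $\partial_\theta$. An induction on $m$ therefore yields
\[
(\nabla^m\partial_\theta G)_{\iota_1,\ldots,\iota_m}=\partial_\theta\bigl[(\nabla^m G)_{\iota_1,\ldots,\iota_m}\bigr]=:\partial_\theta F_{\iota_1,\ldots,\iota_m}(r,z,\theta),
\]
which is precisely the advertised remainder. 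The same commutation property lets me promote $\bar{g}$ to $g$ in the leading term at the cost of another $\partial_\theta$-contribution: the operator $\partial_z^{m_z}\partial_r^{m_r}(\partial_r/r)^{m_c}$ commutes with $\partial_\theta$, hence $\partial_z^{m_z}\partial_r^{m_r}(\partial_r/r)^{m_c}\tilde{g}=\partial_\theta\bigl[\partial_z^{m_z}\partial_r^{m_r}(\partial_r/r)^{m_c}G\bigr]$, which is absorbed into $\partial_\theta F$.

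Finally, to pass from the covariant frame to the unit frame I would use $\mf{E}_r=\bl{e_r}$, $\mf{E}_z=\bl{e_z}$, and $\mf{E}_\theta=r\,\bl{e_\theta}$: each of the $2m_c$ lower indices equal to $\theta$ contributes a factor $r^{-1}$ upon switching to the unit basis, producing the $r^{-2m_c}$ prefactor in the unit-basis version. I expect the only non-routine step to be the commutation argument for the tensor construction, namely the induction that $\partial_\theta$ slides cleanly through \eqref{NTHOR}; once that is in hand, the remainder is direct bookkeeping from Proposition \ref{THM3} and Lemma \ref{lemodd}.
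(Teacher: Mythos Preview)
Your proof is correct, and rests on the same key fact the paper uses—that the Christoffel symbols are $\theta$-independent, so $\partial_\theta$ commutes with the recursion \eqref{NTHOR}. The organization differs, however. The paper argues directly on the recursion: compared with the axisymmetric case of Proposition \ref{THM3}, the only new contributions for a general $g$ are the $\partial_\theta$-terms produced when $\iota_{n+1}=\theta$, and since the Christoffel symbols do not depend on $\theta$ these can be ``shifted to the front'' and collected as $\partial_\theta F$. You instead decompose $g=\bar g+\partial_\theta G$ via the $\theta$-average and a periodic primitive, apply Proposition \ref{THM3} verbatim to $\bar g$, and handle the fluctuation in one stroke through $(\nabla^m\partial_\theta G)_\iota=\partial_\theta(\nabla^m G)_\iota$. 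Your route is a bit more explicit and cleanly separates the two pieces, at the cost of introducing the auxiliary $G$ (and the zero-mean argument needed to make it periodic); the paper's route is shorter but leaves the bookkeeping implicit. Either way the substance is the same commutation observation.
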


\begin{proof}
Proposition \ref{THM3} considered the case without any $\theta$ derivative. Since all the non-zero Christoffel symbols
\[
\Gamma_{\theta\theta}^r=-r,\q\Gamma_{r\theta}^\theta=\Gamma_{\theta r}^\theta=\f{1}{r}
\]
are all independent with $\theta$, once a $\theta$-derivative is created, it can always be ``shifted" to the front. We gather all these terms in $\p_\theta F_{\iota_1,\iota_2,...,\iota_{m}}(r,z,\theta)$. This finishes the proof of the corollary.
\end{proof}

In the cylindrical coordinates, the $x_3$ direction is indeed flat. Thus for a smooth enough 3D vector field $\bl{v}=v_r\bl{e_r}+{v_\theta}\bl{e_\theta}+v_z\bl{e_z}$, given $2m_c+m_r+m_z=m\in\mathbb{N}$, one has
\[
(2m_c-1)!!\,\p_z^{m_z}\p_r^{m_r}\Big(\f{\p_r}{r}\Big)^{m_c}v_z+r^{-2m_c}\p_\theta F_{\iota_1,\iota_2,...,\iota_{m}}(r,z,\theta)
\]
is a component of tensor $\na^m\bl{v}$ for some scalar function $F_{\iota_1,\iota_2,...,\iota_{m}}(r,z,\theta)$. Meanwhile, the following lemma shows $\bar{\na}^\mathfrak{L}v_r$ and $\bar{\na}^\mathfrak{L}v_z$ $(|\mathfrak{L}|=n)$ are also a component of the derivative tensor $\na^n\bl{v}$ :

\begin{lemma}\label{LEMa}
Let $\mathfrak{L}$ be a 2D multi-index such that $|\mathfrak{L}|=n$. Then $\bar{\na}^\mathfrak{L}v_r$ and $\bar{\na}^\mathfrak{L}v_z$ are components of derivative tensor $\na^n\bl{v}$.
\end{lemma}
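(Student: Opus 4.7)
The plan is to show that $\bar{\na}^\mathfrak{L}v_r$ and $\bar{\na}^\mathfrak{L}v_z$ coincide with specific components of $\na^n\bl{v}$ in the cylindrical frame, by reducing to iterated partial derivatives. The key observation is that the cylindrical unit basis vectors $\bl{e_r}$, $\bl{e_\theta}$, $\bl{e_z}$ depend only on $\theta$, hence
\[
\p_r\bl{e_j}=\p_z\bl{e_j}=0,\qquad j\in\{r,\theta,z\}.
\]
Consequently the Leibniz rule gives the clean splitting
\[
\p_z^{l_z}\p_r^{l_r}\bl{v}=(\bar{\na}^\mathfrak{L}v_r)\bl{e_r}+(\bar{\na}^\mathfrak{L}v_\theta)\bl{e_\theta}+(\bar{\na}^\mathfrak{L}v_z)\bl{e_z},
\]
so that $\bar{\na}^\mathfrak{L}v_r=(\p_z^{l_z}\p_r^{l_r}\bl{v})\cdot\bl{e_r}$ and $\bar{\na}^\mathfrak{L}v_z=(\p_z^{l_z}\p_r^{l_r}\bl{v})\cdot\bl{e_z}$.

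Next, I would identify $\p_z^{l_z}\p_r^{l_r}\bl{v}$ with the evaluation of $\na^n\bl{v}$ on $l_r$ copies of $\bl{e_r}$ and $l_z$ copies of $\bl{e_z}$ in its derivative slots. This would be done by induction on $n$, using the vector-valued analogue of \eqref{NTHOR}: when passing from order $n-1$ to order $n$ by differentiating in a new direction $\iota_{n+1}\in\{r,z\}$, every Christoffel-correction term involves some $\Gamma_{\iota_i\iota_{n+1}}^s$ with both lower indices in $\{r,z\}$. By \eqref{CHRI}, every nonzero Christoffel symbol has $(\theta,\theta)$ or $(r,\theta)/(\theta,r)$ as its lower pair; in particular $\Gamma_{ij}^s\equiv 0$ as soon as both lower indices lie in $\{r,z\}$. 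Thus every correction term vanishes and the covariant derivative of $\na^{n-1}\bl{v}$ in the $\bl{e_r}$ or $\bl{e_z}$ direction collapses to the ordinary partial derivative, completing the induction.

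Combining the two steps yields
\[
(\na^n\bl{v}):\bl{e_j}\otimes\underbrace{\bl{e_r}\otimes\cdots\otimes\bl{e_r}}_{l_r\text{ times}}\otimes\underbrace{\bl{e_z}\otimes\cdots\otimes\bl{e_z}}_{l_z\text{ times}}=\bar{\na}^\mathfrak{L}v_j,\qquad j\in\{r,z\},
\]
which is exactly the claim. The main point to be careful about is the vector-field version of \eqref{NTHOR}: compared with the scalar case treated in Proposition \ref{THM3}, there is one additional Christoffel term coming from the non-parallelism of $\bl{e_r}$ and $\bl{e_\theta}$ acting on the output vector slot. However, this extra term is again controlled by a Christoffel symbol whose lower pair contains the new differentiation index $\iota_{n+1}\in\{r,z\}$, and therefore vanishes as well. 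Since no $\theta$-derivative ever appears in the multi-index $\mathfrak{L}$, the argument never interacts with the nontrivial entries $\Gamma_{\theta\theta}^r,\Gamma_{r\theta}^\theta,\Gamma_{\theta r}^\theta$, which is precisely why the reduction to plain iterated partial derivatives succeeds.
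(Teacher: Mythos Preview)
Your proposal is correct and follows essentially the same route as the paper: both arguments hinge on the observation that every Christoffel symbol $\Gamma_{ij}^s$ with both lower indices in $\{r,z\}$ vanishes, so the covariant-derivative recursion \eqref{NTHOR} (extended to the vector case, with the output slot $\iota_1$ included in the sum) collapses to plain partial differentiation and yields $(\na^n\bl{v})_{\iota_1,\ldots,\iota_{n+1}}=\p_{\iota_{n+1}}\cdots\p_{\iota_2}(\bl{v}\cdot\bl{e_{\iota_1}})$. Your preliminary step using $\p_r\bl{e_j}=\p_z\bl{e_j}=0$ is just a restatement of the same Christoffel vanishing and is not really needed separately, but it does no harm.
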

\begin{proof} 
For $\iota_1$, $\iota_2$,...,$\iota_{n+1}=r$ or $z$, a direct calculation of tensor derivatives follows
\[
\begin{split}
(\na^{n}\bl{v})\,:\,\bl{{e}_{\iota_1}}\otimes\bl{{e}_{\iota_2}}\otimes\cdot\cdot\cdot\,\otimes\bl{{e}_{\iota_{n+1}}}&=(\na^{n}\bl{v})\,:\,{\mf{E}_{\iota_1}}\otimes{\mf{E}_{\iota_2}}\otimes\cdot\cdot\cdot\,\otimes{\mf{E}_{\iota_{n+1}}}\\
&=\p_{\iota_{n+1}}(\na^{n-1}\bl{v})_{\iota_1,...,\iota_{n}}-\sum_{s\in\{r,\theta,z\}}\sum_{j=1}^{n}\Gamma_{\iota_j\,\iota_n}^s(\na^{n-1}\bl{v})_{\iota_1,...,\hat{\iota_j},s,...,\iota_{n}}\,.
\end{split}
\]
Since $\iota_i\neq\theta$, for any $i=1,2,...,n+1$, one has $\Gamma_{\iota_j\,\iota_{n+1}}^s=0$. This indicates
\[
(\na^n\bl{v})_{\iota_1,...,\iota_n}=\p_{\iota_{n+1}}(\na^{n-1}\bl{v})_{\iota_1,...,\iota_{n}}\,.
\]
Thus a direct induction follows
\[
(\na^{n}\bl{v})\,:\,\bl{{e}_{\iota_1}}\otimes\bl{{e}_{\iota_2}}\otimes\cdot\cdot\cdot\,\otimes\bl{{e}_{\iota_{n+1}}}=\p_{\iota_{n+1}}\cdot\cdot\cdot\p_{\iota_2}(\bl{v}\cdot\bl{{e}_{\iota_1}})\,,
\]
which concludes the Lemma.
\end{proof}

\subsection{Other useful lemmas}
Two well-known lemmas will be listed in this section with out detailed proof. First, let us introduce the well-known $Gagliardo-Nirenberg$ interpolation inequality. We refer readers to \cite{GN} for a detailed proof.
\begin{lemma}[Gagliardo-Nirenberg]\label{LEMGN}
Given $q,r\in[1,\i]$ and $j,m\in\mathbb{N}\cup\{0\}$ with $j\leq m$. Suppose that $f\in L^q(\mathbb{R}^3)$, $\na^mf\in L^r(\mathbb{R}^3)$ and there exists a real number $\al\in[j/m,1]$ such that
\[
\frac{1}{p}=\frac{j}{3}+\al\Big(\frac{1}{r}-\frac{m}{3}\Big)+\frac{1-\al}{q}.
\]
Then $f\in\dot{W}^{j,3}(\mathbb{R}^d)$ and there exists a constant $C>0$ such that
\[
\|\na^jf\|_{L^p(\mathbb{R}^3)}\leq C\|\na^m f\|^\al_{L^r(\mathbb{R}^3)}\|f\|^{1-\al}_{L^q(\mathbb{R}^3)},
\]
except the following two cases:
\begin{itemize}
\item[I.] $j=0$, $mr<d$ and $q=\infty$; (In this case it is necessary to assume also that either $|u|\to 0$ at infinity, or $u\in L^s(\mathbb{R}^d)$ for some $s<\infty$.)

\item[II.] $1<r<\infty$ and $m-j-d/r\in\mathbb{N}$. (In this case it is necessary to assume also that $\alpha<1$.)
\end{itemize}
\end{lemma}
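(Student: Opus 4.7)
The plan is to prove the Gagliardo-Nirenberg inequality via a Littlewood-Paley (dyadic frequency) decomposition combined with Bernstein's inequality. This approach treats all admissible exponent combinations uniformly and makes transparent why the two exceptional cases must be excluded. Throughout, the scaling identity
\[
\f{1}{p} = \f{j}{3} + \al\Big(\f{1}{r} - \f{m}{3}\Big) + \f{1-\al}{q}
\]
serves as a guide: testing the desired estimate on the rescaled family $f_\la(x) = f(\la x)$ forces exactly this relation, so the hypothesis is sharp and the identity pinpoints the ``balance point'' that the proof will exploit.

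First I would fix a standard dyadic partition $1 = \psi_0(\xi) + \sum_{k\geq 0}\varphi(2^{-k}\xi)$ with associated frequency projections $S_0$ and $\Dl_k$ $(k\geq 0)$, so that $f = S_0 f + \sum_{k\geq 0}\Dl_k f$. The workhorse is Bernstein's inequality: for any $g$ with Fourier support in $\{|\xi|\lesssim 2^k\}$,
\[
\|\na^j g\|_{L^p} \lesssim 2^{kj}\|g\|_{L^p},\q \|g\|_{L^p} \lesssim 2^{3k(1/s - 1/p)}\|g\|_{L^s}\q(s\leq p).
\]
Applying these to $\Dl_k f$, together with the low-frequency side estimate $\|\Dl_k f\|_{L^q} \lesssim \|f\|_{L^q}$ and the high-frequency side estimate $\|\Dl_k f\|_{L^r} \lesssim 2^{-km}\|\na^m f\|_{L^r}$ (from $\na^m \Dl_k f \sim 2^{km}\Dl_k f$), yields the two competing upper bounds
\[
\|\na^j \Dl_k f\|_{L^p} \lesssim 2^{ka}\|f\|_{L^q},\q \|\na^j \Dl_k f\|_{L^p} \lesssim 2^{-kb}\|\na^m f\|_{L^r},
\]
with $a = j + 3(1/q - 1/p)$, $b = m - j + 3(1/p - 1/r)$, both non-negative on the admissible parameter range.

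Summing optimally -- choosing $k^*$ so that the two bounds match, i.e.\ $2^{k^*(a+b)}\|f\|_{L^q} \simeq \|\na^m f\|_{L^r}$, using the first bound for $k \leq k^*$ and the second for $k > k^*$ (two convergent geometric series whenever $a, b > 0$), then absorbing $S_0 f$ via Young's inequality with a Schwartz kernel -- produces exactly
\[
\|\na^j f\|_{L^p} \lesssim \|\na^m f\|_{L^r}^\al \|f\|_{L^q}^{1-\al},\q \al = \f{a}{a+b}\in\Big[\f{j}{m},1\Big],
\]
which is the desired estimate, with the stated $\al$ recovered by a short algebraic check against the scaling identity.

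The \emph{main obstacle} is the delicate handling of the two excluded cases. Case~I ($j=0$, $mr<3$, $q=\i$) corresponds to the critical Sobolev embedding $W^{m,r}\hookrightarrow L^\i$ failing by a logarithmic loss; the auxiliary hypothesis $|u|\to 0$ at infinity or $u \in L^s$ for some finite $s$ is precisely what restores control of the low-frequency piece $S_0 f$. Case~II ($1<r<\i$, $m-j-3/r\in\mathbb{N}$) lies at the scale where one of the Bernstein exponents $a$ or $b$ vanishes, so the relevant geometric series degenerates into a divergent harmonic series; only the strict interior $\al<1$ remains accessible, and a Besov-space refinement is typically needed at this borderline sum. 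A secondary technical nuisance is the genuine endpoint $r=1$ or $p=\i$, where Bernstein must be justified via direct kernel estimates rather than $L^p$-boundedness of Fourier multipliers.
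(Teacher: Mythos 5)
The paper does not actually prove this lemma: it is stated as a known result and the reader is referred to Nirenberg's original article \cite{GN}, so there is no in-paper argument to compare yours against. Your Littlewood--Paley/Bernstein route is the standard modern proof and is genuinely different from Nirenberg's original one, which is elementary (one-dimensional integral inequalities on lines, followed by induction on the dimension and on the order of the derivatives, with no Fourier analysis). Your approach buys a cleaner unified treatment of all intermediate exponents and makes the scaling relation and the exceptional cases conceptually transparent; Nirenberg's buys the true endpoints (including $r=1$ and $p=\infty$) without having to worry about failure of Fourier-multiplier bounds.

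That said, as a proof of the lemma \emph{as stated} your sketch has two real soft spots. First, the two Bernstein bounds $\|\na^j\Delta_k f\|_{L^p}\lesssim 2^{ka}\|f\|_{L^q}$ and $\|\na^j\Delta_k f\|_{L^p}\lesssim 2^{-kb}\|\na^m f\|_{L^r}$ implicitly require $p\geq q$ and $p\geq r$ respectively (frequency localization only lets you \emph{raise} integrability), and this fails for some admissible parameter triples; in that regime the low-frequency piece $S_0f$ cannot be absorbed by Young's inequality against $\|f\|_{L^q}$ either, and one must interpolate the low frequencies differently (e.g.\ via a homogeneous decomposition). Second, your summation argument genuinely needs $a,b>0$, i.e.\ it only delivers the open range $j/m<\al<1$: at $\al=1$ one computes $b=m-j+3(1/p-1/r)=0$, so the high-frequency series is exactly the divergent borderline and the claimed estimate at $\al=1$ is the Sobolev embedding itself, which requires a separate argument (Hardy--Littlewood--Sobolev or the $r=1$ endpoint trick); similarly $a$ can vanish at $\al=j/m$. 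Since the lemma explicitly includes these endpoints (excluding only Cases I and II), you should either treat them separately or restrict the claim to the interior range --- which, incidentally, is all the paper ever uses.
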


\qed

Now we focus on the following estimates of a triple product form which will be frequently applied in the final proof.
\begin{lemma}\label{LEMET1}
Let $m\in\mathbb{N}$ and $m\geq 2$, $f,g,k\in C^\infty_0(\mathbb{R}^3)$. The following estimate holds:
\be\label{E1}
\begin{split}
\bigg|\int_{\mathbb{R}^3}[\nabla^m,\,f\cdot\nabla]g\nabla^m kdx\bigg|\leq&\,C\,\|\nabla^{m}(f,g,k)\|_{L^2}^2\|\nabla (f,\,g)\|_{L^\infty}.\\
\end{split}
\ee
\end{lemma}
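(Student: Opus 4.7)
The plan is to expand the commutator by the Leibniz rule, peel off $\nabla^m k$ via Cauchy--Schwarz, and then bound each resulting product using Hölder combined with Gagliardo--Nirenberg interpolation. Writing $f\cdot\nabla = \sum_{i}f_i\p_{x_i}$ and applying the Leibniz formula to $\nabla^m(f\cdot\nabla g)$, the term carrying all $m$ derivatives on $\nabla g$ cancels exactly with $f\cdot\nabla^{m+1}g$, leaving the schematic pointwise estimate
\[
\bigl|[\nabla^m,\,f\cdot\nabla]g\bigr|\leq C_m\sum_{j=1}^{m}\bigl|\nabla^j f\bigr|\,\bigl|\nabla^{m+1-j}g\bigr|\,.
\]
Pairing against $\nabla^m k$ and applying Cauchy--Schwarz in $L^2$ reduces the task to showing, for each $j\in\{1,\ldots,m\}$,
\[
\bigl\|\,|\nabla^j f|\,|\nabla^{m+1-j}g|\,\bigr\|_{L^2}\leq C\,\|\nabla (f,g)\|_{L^\infty}\,\bigl(\|\nabla^m f\|_{L^2}+\|\nabla^m g\|_{L^2}\bigr)\,,
\]
after which Cauchy--Schwarz with $\nabla^m k$ and $2ab\leq a^2+b^2$ yield the claimed bound.

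For the two extreme indices the inequality is immediate from Hölder: when $j=1$ one has $\|\,|\nabla f|\,|\nabla^m g|\,\|_{L^2}\leq\|\nabla f\|_{L^\infty}\|\nabla^m g\|_{L^2}$, and symmetrically when $j=m$. For the interior indices $2\leq j\leq m-1$ (present only when $m\geq 3$), I would use Hölder with the conjugate exponents $p_j=\f{2(m-1)}{j-1}$ and $q_j=\f{2(m-1)}{m-j}$, which satisfy $\f{1}{p_j}+\f{1}{q_j}=\f{1}{2}$, and then invoke Lemma \ref{LEMGN} to obtain
\[
\|\nabla^j f\|_{L^{p_j}}\leq C\|\nabla f\|_{L^\infty}^{\frac{m-j}{m-1}}\|\nabla^m f\|_{L^2}^{\frac{j-1}{m-1}}\,,\q \|\nabla^{m+1-j}g\|_{L^{q_j}}\leq C\|\nabla g\|_{L^\infty}^{\frac{j-1}{m-1}}\|\nabla^m g\|_{L^2}^{\frac{m-j}{m-1}}\,.
\]
Multiplying these and applying Young's inequality with conjugate exponents $\f{m-1}{j-1}$ and $\f{m-1}{m-j}$ removes the fractional powers and produces exactly the required control.

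The only place requiring real care is bookkeeping: one must verify that the scaling identity of Lemma \ref{LEMGN} holds for each pair $(j,p_j)$ when interpolating between endpoints $(a,p_a)=(m,2)$ and $(b,p_b)=(1,\infty)$, and that neither exceptional case of that lemma is activated. Since $r=2$ and the interpolation parameter $\al=(j-1)/(m-1)$ lies strictly inside $(0,1)$ for $2\leq j\leq m-1$, both thresholds are safely avoided; beyond this verification no genuine analytic difficulty arises.
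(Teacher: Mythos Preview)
Your argument is correct. The overall structure matches the paper's: both apply Cauchy--Schwarz to separate off $\nabla^m k$ and reduce to an $L^2$ bound on the commutator $[\nabla^m,f\cdot\nabla]g$. The difference is that the paper simply invokes the Kato--Ponce commutator estimate \cite{KP:1988CPAM} as a black box to obtain
\[
\|[\nabla^m,f\cdot\nabla]g\|_{L^2}\leq C\big(\|\nabla f\|_{L^\infty}\|\nabla^m g\|_{L^2}+\|\nabla g\|_{L^\infty}\|\nabla^m f\|_{L^2}\big),
\]
whereas you prove this inequality from scratch via Leibniz expansion, H\"older with the exponents $p_j=2(m-1)/(j-1)$, $q_j=2(m-1)/(m-j)$, and the Gagliardo--Nirenberg interpolation of Lemma~\ref{LEMGN}. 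Your route is more elementary and self-contained (it avoids importing the full Kato--Ponce machinery, which is really designed for fractional orders), at the cost of being longer; the paper's route is terse but relies on an external reference. Both yield exactly the same intermediate bound, so neither has a genuine advantage in strength.
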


\begin{proof} We apply H\"{o}lder inequality, one derives
\be\label{2.77777}
\bigg|\int_{\mathbb{R}^3}[\nabla^m,\,f\cdot\nabla]g\nabla^m kdx\bigg|\leq\|[\nabla^m,\,f\cdot\nabla]g\|_{L^{2}}\|\nabla^mk\|_{L^2}.
\ee
Due to the commutator estimate by Kato-Ponce \cite{KP:1988CPAM}, it follows that
\be\label{2.88888}
\|[\nabla^m,\,f\cdot\nabla]g\|_{L^{2}}\leq C\big(\|\nabla f\|_{L^\infty}\|\nabla^mg\|_{L^2}+\|\nabla g\|_{L^\infty}\|\nabla^m f\|_{L^2}\big).
\ee
Then \eqref{E1} follows from substituting \eqref{2.88888} in \eqref{2.77777}.
\end{proof}


\section{Proof of main results}\label{Main}
\subsection{Fundamental energy estimates and ${L^p}$ conservation of ${\mH}$}\label{SEC3.1}

\q\ At the beginning, the following Lemma states fundamental estimates of the system \eqref{Hall}:
\begin{lemma}[Fundamental energy estimate]\label{FUND}
Let $(\bl{v},\bl{h})\in H^3$ be the solution of \eqref{Hall}, then we have:
\be\label{EFF}
\|(\bl{v},\bl{h})(t,\cd)\|_{L^2}^2\leq \|(\bl{v}_0,\bl{h}_0)\|_{L^2}^2\,.
\ee
\end{lemma}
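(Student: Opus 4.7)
The plan is to run the standard energy method, multiplying the momentum equation by $\bl{v}$, the induction equation by $\bl{h}$, integrating by parts, and summing. The whole proof hinges on the fact that, despite the second-order nature of the Hall term, it is in curl form and so vanishes in the $L^2$ pairing against $\bl{h}$; together with the usual MHD cross-term cancellation, this yields a closed $L^2$ identity (an inequality once the dissipative terms, with signs $\mu,\nu\geq0$, are dropped).

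First I would take the $L^2$ inner product of \eqref{Hall}$_1$ with $\bl{v}$. By $\nabla\cdot\bl{v}=0$, the transport piece $\int (\bl{v}\cdot\nabla)\bl{v}\cdot\bl{v}\,dx=\frac12\int \bl{v}\cdot\nabla|\bl{v}|^2\,dx=0$ and the pressure term $\int \nabla P\cdot\bl{v}\,dx=0$ both vanish, the viscous contribution gives $\mu\|\nabla\bl{v}\|_{L^2}^2\geq0$, and there remains the Lorentz term $\int (\bl{h}\cdot\nabla)\bl{h}\cdot\bl{v}\,dx$. Next I would pair \eqref{Hall}$_2$ with $\bl{h}$: again the convection term drops by $\nabla\cdot\bl{v}=0$, the resistive term yields $\nu\|\nabla\bl{h}\|_{L^2}^2\geq0$, and I am left with $\int (\bl{h}\cdot\nabla)\bl{v}\cdot\bl{h}\,dx$ plus the Hall integral
\begin{equation*}
\int_{\mathbb{R}^3}\nabla\times\bigl((\nabla\times\bl{h})\times\bl{h}\bigr)\cdot\bl{h}\,dx.
\end{equation*}

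The two remaining nonlinear cross terms cancel in the usual MHD fashion: using $\nabla\cdot\bl{h}=0$,
\begin{equation*}
\int_{\mathbb{R}^3}(\bl{h}\cdot\nabla)\bl{h}\cdot\bl{v}\,dx+\int_{\mathbb{R}^3}(\bl{h}\cdot\nabla)\bl{v}\cdot\bl{h}\,dx=\int_{\mathbb{R}^3}(\bl{h}\cdot\nabla)(\bl{v}\cdot\bl{h})\,dx=0.
\end{equation*}
For the Hall integral I would integrate by parts once to transfer the outer curl onto $\bl{h}$, producing $\int \bigl((\nabla\times\bl{h})\times\bl{h}\bigr)\cdot(\nabla\times\bl{h})\,dx$. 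The integrand is a scalar triple product in which two of the three slots are the same vector $\nabla\times\bl{h}$, and $(a\times b)\cdot a\equiv 0$, so this is identically zero. This is exactly the structural reason the Hall term is compatible with $L^2$ energy conservation.

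Adding the two identities produces
\begin{equation*}
\tfrac{1}{2}\tfrac{d}{dt}\bigl(\|\bl{v}\|_{L^2}^2+\|\bl{h}\|_{L^2}^2\bigr)+\mu\|\nabla\bl{v}\|_{L^2}^2+\nu\|\nabla\bl{h}\|_{L^2}^2=0,
\end{equation*}
so dropping the nonnegative dissipation and integrating in time gives \eqref{EFF}. There is really no serious obstacle here since the $H^3$ hypothesis supplies more than enough regularity to justify every integration by parts; the only point worth flagging explicitly is the curl-structure cancellation of the Hall contribution, which is the reason this estimate survives the loss of resistivity even though higher-order energy estimates do not come so cheaply.
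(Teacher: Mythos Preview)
Your proof is correct and is precisely the ``standard $L^2$ energy estimate'' the paper invokes; the paper itself omits all details, so your write-up simply fills in the routine computations (transport/pressure cancellations, the MHD cross-term cancellation via $\nabla\cdot\bl{h}=0$, and the Hall cancellation via the curl--triple-product identity) that the author left implicit.
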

\begin{proof}
Inequality \eqref{EFF} follows from standard $L^2$ energy estimate of the system \eqref{Hall}, we omit the details here.
\end{proof}

\begin{lemma}[$L^p$ conservation of $\mH$]\label{LEMFUND}
Define $\mH:=\f{h_\theta}{r}$. Let $(\bl{v},\bl{h})\in H^3$ be the solution of \eqref{Hall}, then we have: For $p\in[2,\infty]$ and $t\in(0,\i)$,
\be\label{4.2}
\|\mH(t,\cdot)\|_{L^p}=\|\mH_0\|_{L^p}.
\ee
\end{lemma}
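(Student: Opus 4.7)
The plan is to do a direct $L^p$ energy estimate on the evolution equation \eqref{EHH} for $\mathcal{H}$, which reads
\[
\partial_t\mH+(v_r\p_r+v_z\p_z)\mH-\mH\,\tfrac{\p_\theta v_\theta}{r}-2\mH\,\p_z\mH=0.
\]
Since $\mH$ depends only on $(t,r,z)$, the transport vector can equivalently be written as $\bl{v}\cdot\nabla\mH$. For $p\in[2,\infty)$, I would multiply the equation by $|\mH|^{p-2}\mH$ and integrate over $\mathbb{R}^3$, producing the standard time derivative $\tfrac{1}{p}\tfrac{d}{dt}\|\mH\|_{L^p}^p$, and then show that each of the remaining three integrals vanishes.

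First, the transport term: since $\bl{v}$ is divergence-free, a straightforward integration by parts gives
\[
\int_{\mathbb{R}^3}(\bl{v}\cdot\nabla\mH)|\mH|^{p-2}\mH\,dx=\tfrac{1}{p}\int_{\mathbb{R}^3}\bl{v}\cdot\nabla(|\mH|^p)\,dx=-\tfrac{1}{p}\int_{\mathbb{R}^3}(\nabla\cdot\bl{v})|\mH|^p\,dx=0.
\]
Second, the Hall-type self-interaction $-2\mH\p_z\mH$: since $|\mH|^{p-2}\mH\cdot\mH=|\mH|^p$, the integrand becomes $-2|\mH|^p\p_z\mH$, which is a perfect $z$-derivative of $-\tfrac{2}{p+1}\p_z(\mH|\mH|^p)$, and so the integral vanishes after integration by parts with the decay at infinity afforded by $\mH\in H^3\hookrightarrow C^0_b$.

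The key step is the middle term $-\int \mH\,\tfrac{\p_\theta v_\theta}{r}|\mH|^{p-2}\mH\,dx=-\int\tfrac{\p_\theta v_\theta}{r}|\mH|^p\,dx$. Here I would invoke precisely the $\theta$-cancellation \eqref{ENS} of the introduction: because $|\mH|^p$ is axially symmetric, writing the volume element as $r\,dr\,dz\,d\theta$ yields
\[
\int_{\mathbb{R}^3}\tfrac{\p_\theta v_\theta}{r}|\mH|^p\,dx=\int_{-\infty}^{\infty}\!\!\int_0^\infty |\mH|^p\!\left(\int_0^{2\pi}\p_\theta v_\theta\,d\theta\right)dr\,dz=0
\]
by $2\pi$-periodicity in $\theta$. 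This is the most delicate step conceptually because it is the sole cancellation that makes the azimuthal geometry work; without it, the factor $\mH\p_\theta v_\theta/r$ would not be controllable by $\|\mH\|_{L^p}$ alone.

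Combining the three vanishing terms gives $\tfrac{d}{dt}\|\mH\|_{L^p}^p=0$ for all $p\in[2,\infty)$ and all $t$ in the existence interval, hence $\|\mH(t,\cdot)\|_{L^p}=\|\mH_0\|_{L^p}$. For the endpoint $p=\infty$, since $\mH_0\in H^3\hookrightarrow L^\infty$ and likewise for $\mH(t,\cdot)$, I would pass to the limit $p\to\infty$ in the identity $\|\mH(t,\cdot)\|_{L^p}=\|\mH_0\|_{L^p}$ (using $\mH\in L^2\cap L^\infty$ so all intermediate $L^p$ norms are finite) to conclude $\|\mH(t,\cdot)\|_{L^\infty}=\|\mH_0\|_{L^\infty}$.
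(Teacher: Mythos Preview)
Your proof is correct and follows essentially the same approach as the paper: multiply \eqref{EHH} by $|\mH|^{p-2}\mH$, use the divergence-free condition on $\bl v$ for the transport term, write the Hall term as a total $z$-derivative, and exploit the $\theta$-independence of $\mH$ to kill the $\tfrac{\partial_\theta v_\theta}{r}$ term, then pass $p\to\infty$ for the endpoint. The only cosmetic difference is that the paper handles the $\theta$-cancellation by integrating by parts in $\theta$ (so that the derivative lands on $|\mH|^p$ and vanishes), whereas you integrate $\partial_\theta v_\theta$ directly over $[0,2\pi]$; these are the same computation.
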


\begin{proof}
Recall $\eqref{EHH}$:
\[
\p_t\mH+(v_r\p_r+v_z\p_z)\mH-\mH\f{\p_\theta v_\theta}{r}-2\mH\p_z\mH=0.
\]
For any $p\geq2$, multiplying it by $p\mH|\mH|^{p-2}$ and integrating over $\mathbb{R}^3$, noting that
\[
p\int_{\mR^3}(v_r\p_r+v_z\p_z)\mH\cdot\mH|\mH|^{p-2}dx=\int_{\mR^3}\bl{v}\cdot\nabla|\mH|^pdx=-\int_{\mR^3}(\text{div }\bl{v})|\mH|^pdx=0,
\]
and
\[
p\int_{\mR^3}\f{\p_\theta v_\theta}{r}|\mH|^{p}dx=-p\int_{\mR^3}\f{ v_\theta}{r}\p_\theta|\mH|^{p}dx=0\,,
\]
one arrives
\[\begin{split}
\f{d}{dt}\|\mH(t,\cdot)\|_{L^p}^p&=\f{2}{p+1}\int_{\mathbb{R}^3}\p_z\big(\mH|\mH|^{p}\big)dx=0.\\
\end{split}\]
Integrating over $(0,t)$, one derives \eqref{4.2} for $p<\i$. The $L^\i$ case follows by performing $p\to\i$.
\end{proof}

\subsection{Higher-order estimates of $(\bl{v},\bl{h})$}

The following energy estimate in this subsection is relatively classical. Given $3\leq m\in\mathbb{N}$, applying $\na^m$ \eqref{Hall}$_{12}$, and performing the $L^2$ inner product of the resulting equations with $\na^m \bl{v}$ and $\na^m \bl{h}$ respectively, noting that
\[
\int_{\mR^3}\bl{h}\cdot\nabla\nabla^m\bl{h}\cdot\nabla^m\bl{v}dx+\int_{\mR^3}\bl{h}\cdot\nabla\na^m\bl{v}\cdot\na^m\bl{h}dx=0,
\]
and
\[
\na\times(\na\times\bl{h}\times\bl{h})=-2\mH\p_zh_\theta\bl{e_\theta}=-2\mH\p_z\bl{h}\,,
\]
we can obtain
\be\label{4.111}
\bali
&\f{1}{2}\f{d}{dt}\|\na^m(\bl{v},\bl{h})(t,\cdot)\|^2_{L^2}\\
=&-\un{\int_{\bR^3}[\nabla^m,\,\bl{v}\cdot\nabla]\bl{v}\cdot\na^m \bl{v} dx}_{I_1}+\un{\int_{\bR^3}[\na^m,\,\bl{h}\cdot\na]\bl{h}\cdot\na^m\bl{v}dx}_{I_2}-\un{\int_{\bR^3}[\na^m,\,\bl{v}\cdot\na]\bl{h}\cdot\na^m \bl{h}dx}_{I_3}\\
&+\un{\int_{\bR^3}[\nabla^m,\,\bl{h}\cdot\nabla]\bl{v}\cdot\na^m\bl{h}dx}_{I_4}+\un{2\int_{\bR^3}\nabla^m(\mH\p_z\bl{h})\cdot\nabla^m \bl{h}dx}_{I_5}\,.
\eali
\ee
Applying \eqref{E1} in Lemma \ref{LEMET1}, we have $I_1$--$I_4$ satisfy
\be\label{4.22}
I_j\,\lesssim\|\nabla (\bl{v}, \bl{h})(t,\cdot)\|_{L^\infty}\|\nabla^{m}(\bl{v},\bl{h})(t,\cdot)\|_{L^2}^2,\quad\forall j=1,2,3,4.
\ee
For term $I_5$, which arises from the Hall effect, we split it into two parts
\be\label{4.330}
I_5=\un{2\int_{\mR^3}\mH\p_z\na^m\bl{h}\cdot\na^m \bl{h}dx}_{I_{51}}+\un{2\int_{\mR^3}[\na^m,\mH\p_z]\bl{h}\cdot\na^m \bl{h}dx}_{I_{52}}.
\ee
Using integration by parts and H\"older's inequality, one arrives at
\be\label{4.331}
|I_{51}|=\Big|\int_{\mR^3}\p_z\mH|\na^m \bl{h}|^2dx\Big|\leq\|\p_z\mH(t,\cd)\|_{L^\i}\|\na^m \bl{h}(t,\cd)\|_{L^2}^2.
\ee
Using Lemma \ref{LEMET1}, one can handle $I_{52}$ in the following way
\be\label{4.332}
|I_{52}|\les \|\na(\bl{h},\mH)(t,\cd)\|_{L^\i}\|\na^m(\bl{h},\mH)(t,\cd)\|_{L^2}^2.
\ee
Substituting \eqref{4.332} and \eqref{4.331} to \eqref{4.330}, and then insert it together with \eqref{4.22} into the right hand side of \eqref{4.111}, one concludes
\[
\f{d}{dt}\|\na^m(\bl{v},\bl{h})(t,\cdot)\|^2_{L^2}\les\|\na(\bl{v},\bl{h},\mH)(t,\cd)\|_{L^\i}\|\na^m(\bl{v},\bl{h},\mH)(t,\cd)\|_{L^2}^2.
\]
Integrating on the temporal variable over $(0,t)$ and recalling the fundamental energy estimate of $(\bl{v},\bl{h})$ in Lemma \ref{FUND}, one concludes that
\be\label{ENVH}
\|(\bl{v},\bl{h})(t,\cdot)\|^2_{H^m}\leq\|(\bl{v}_0,\bl{h}_0)(t,\cdot)\|^2_{H^m}+C\int_0^t\|(\bl{v},\bl{h},\mH)(t,\cd)\|_{H^m}^3dt\,.
\ee
To close the above estimate, we will proceed with the required estimates of $\mH$ in the following subsection.
\subsection{Higher-order estimates of $\mH$}
Due to the lack of axial symmetry of $(\bl{v},\bl{h})$, here one extra term $\mH\f{\p_\theta v_\theta}{r}$ arises in the equation of $\mH$
\[
\p_t\mH+(v_r\p_r+v_z\p_z)\mH-\un{\mH\f{\p_\theta v_\theta}{r}}_{\text{Main difficulty}}-2\mH\p_z\mH=0\,.
\]
This causes a major difficulty in deriving the required higher order energy estimate of $\mH$. Indeed, if we roughly treat this term as $\mH\na\bl{v}$ and performing the $L^2$ inner product of $\na^m\mH$, one needs an estimate of the $(m+1)-$th order norm of $\bl{v}$ or $\mH$ to bound the following integral
\[
\int_{\bR^3}\na^m\mH\na^m(\mH\na\bl{v})dx,
\]
while we are only allowed to apply terms with $m-$th order. This results in an endless chain and it seems impossible to settle down the local well-posedness of \eqref{Hall} in a Sobolev space.

However, the $\theta$-derivative and the axial symmetry of $\mH$ inspire us to cancel this trouble term instead of to control it. More precisely, given $f=f(r,z)$ an axisymmetric function and $m\in\mathbb{N}$, it is clear that
\be\label{Van1}
\int_{\bR^3}f(r,z)\bar{\na}^m\Big(\mH\f{\p_\theta v_\theta}{r}\Big)dx=\int_{\bR}\int_0^\i\int_0^{2\pi}\f{d}{d\theta}\bigg(f(r,z)\bar{\na}^m\Big(\mH\f{v_\theta}{r}\Big)\bigg)d\theta rdrdz=0\,.
\ee
This observation seems to settle down the aforementioned trouble, but one problem is still alive: In fact, despite $\mH$ is axially symmetric, its gradient $\nabla^m\mH$ may not. A direct example is
\[
\p_{x_1}\mH=\cos\theta \p_r\mH\,.
\]
Thus to apply a vanishing property as \eqref{Van1}, results for structure of higher-order derivative tensor  $\na^m\mH$ and $\na^m\bl{v}$ in the cylindrical coordinates in Section \ref{PRE} will be applied. In the following, we will perform $L^2$-based energy estimates of $\na^m\mH$, and the main trouble term $\f{\p_\theta v_\theta}{r}\mH$ will always keep silent with the help of Proposition \ref{THM3}, Corollary \ref{COR255} and Lemma \ref{LEMa}.

Before that, one needs the following proposition which represents the commutator of $\mathcal{D}^\mathfrak{M}$ and $\bl{u}\cdot\bar{\nabla}$ :
\begin{proposition}\label{PROP33}
Given $f=f(r,z)$ and $g=g(r,z)$ are smooth axisymmetric scalar functions, let $\bl{u}=u_r(r,z,\theta)\bl{e_r}+u_z(r,z,\theta)\bl{e_z}+u_\theta(r,z,\theta)\bl{e_\theta}$ be a smooth divergence-free vector field. Then it holds that
\be\label{MMM0}
\begin{split}
\big[\mathcal{D}^\mathfrak{M}\,,\,g\p_z\big]f=&\sum_{1\leq|\mathfrak{N}|\leq |\mathfrak{M}|}C_{1,\,\mathfrak{N}}\mathcal{D}^{\mathfrak{N}}g\mathcal{D}^{\mathfrak{M}-\mathfrak{N}}\p_zf\,;
\end{split}
\ee
\be\label{MMM}
\begin{split}
\big[\mathcal{D}^\mathfrak{M}\,,\,\bl{u}\cdot\bar{\na}\big]f=&\sum_{1\leq|\mathfrak{N}|\leq |\mathfrak{M}|}C_{2,\,\mathfrak{N}}\mathcal{D}^{\mathfrak{N}}u_z\mathcal{D}^{\mathfrak{M}-\mathfrak{N}}\p_zf\\
&+\sum_{0\leq|\mathfrak{N}|\leq |\mathfrak{M}|-1}C_{3,\,\mathfrak{N}}\mathcal{D}^\mathfrak{N}\Big(\f{\p_\theta u_\theta}{r}+\p_zu_z\Big)\mathcal{D}^{\mathfrak{M}-\mathfrak{N}}f\\
&+\sum_{1\leq|\mathfrak{L}|\leq |\bar{\mathfrak{M}}|}C_{4,\,\mathfrak{L}}\bar{\na}^{\mathfrak{L}}u_r\p_r\mathcal{D}^{\mathfrak{M}-(0,l_r,l_z)}f\,.
\end{split}
\ee
Here $\bl{u}\cdot\bar{\na}=u_r\p_r+u_z\p_z$, while the coefficients $C_{i,\,\mathfrak{N}}$ for $i=1,2,3$ and $C_{4,\,\mathfrak{L}}$ are integers.
\end{proposition}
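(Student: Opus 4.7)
The plan is to first establish a multi-index Leibniz formula for $\mathcal{D}^\mathfrak{M}$. Each of $\partial_z$, $\partial_r$, and $L:=\partial_r/r$ is a derivation on smooth functions (the last because $L(fg) = (1/r)\partial_r(fg) = (Lf)g + f(Lg)$), so applying them in the prescribed order $\partial_z^{m_z}\partial_r^{m_r}L^{m_c}$ and iterating the single-variable binomial Leibniz rule at each stage yields
$$\mathcal{D}^\mathfrak{M}(uv) \;=\; \sum_{\mathfrak{N}\le\mathfrak{M}}\binom{\mathfrak{M}}{\mathfrak{N}}\mathcal{D}^\mathfrak{N}u\cdot\mathcal{D}^{\mathfrak{M}-\mathfrak{N}}v,\qquad\binom{\mathfrak{M}}{\mathfrak{N}}:=\binom{m_c}{n_c}\binom{m_r}{n_r}\binom{m_z}{n_z},$$
valid for arbitrary smooth $u,v$ (no axisymmetry is required, since $\mathcal{D}^\mathfrak{M}$ leaves $\theta$ untouched). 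In addition, $\partial_z$ genuinely commutes with each of $\partial_r,\partial_z,L$, and hence with $\mathcal{D}^\mathfrak{M}$.

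Once this is in place, identity \eqref{MMM0} is immediate: since $\partial_z$ commutes with $\mathcal{D}^\mathfrak{M}$, one has $[\mathcal{D}^\mathfrak{M}, g\partial_z]f = [\mathcal{D}^\mathfrak{M}, g]\partial_z f$, and expanding the product $g\cdot\partial_z f$ by the Leibniz formula and dropping the $\mathfrak{N}=0$ term yields the claim with $C_{1,\mathfrak{N}} = \binom{\mathfrak{M}}{\mathfrak{N}}$.

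For \eqref{MMM}, I would split $\bl{u}\cdot\bar{\na} = u_z\partial_z + u_r\partial_r$. The $u_z\partial_z$ commutator is handled exactly as in \eqref{MMM0} and produces the first sum. The $u_r\partial_r$ piece is the delicate one; I would induct on $m_c$. The base case $m_c=0$ is the ordinary Leibniz expansion of $\bar{\na}^{\bar\mathfrak{M}}(u_r\partial_r f)$, which produces only the third sum. For the inductive step, write $\mathcal{D}^{(m_c+1,m_r,m_z)} = \mathcal{D}^{(m_c,m_r,m_z)}L$ and use the key pointwise identity
$$[L, u_r\partial_r]f \;=\; \Bigl(\tfrac{\partial_r u_r}{r} + \tfrac{u_r}{r^2}\Bigr)\partial_r f \;=\; \tfrac{1}{r}\bigl(\partial_r u_r + \tfrac{u_r}{r}\bigr)\partial_r f \;=\; -\Bigl(\tfrac{\partial_\theta u_\theta}{r} + \partial_z u_z\Bigr)Lf,$$
where the divergence-free condition $\partial_r u_r + u_r/r = -(\partial_\theta u_\theta/r + \partial_z u_z)$ is applied in the final equality. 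Substituting this into $\mathcal{D}^{(m_c+1,\ldots)}(u_r\partial_r f) = \mathcal{D}^{(m_c,\ldots)}(L(u_r\partial_r f))$ and then invoking the inductive hypothesis on the resulting $\mathcal{D}^{(m_c,\ldots)}(u_r\partial_r Lf)$ (legitimate, since $Lf$ is still axisymmetric) together with the opening Leibniz formula on $\mathcal{D}^{(m_c,\ldots)}((\partial_\theta u_\theta/r + \partial_z u_z)Lf)$, every new term falls into one of the three families: the third-sum terms from the hypothesis transport verbatim via $\mathcal{D}^{(m_c,\ldots)-(0,l_r,l_z)}Lf = \mathcal{D}^{(m_c+1,\ldots)-(0,l_r,l_z)}f$, and the second-sum contributions from the hypothesis combine with the fresh Leibniz expansion to form the new second sum.

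The hard part will be handling the non-commutativity $[L,\partial_r] = L/r$: a naive Leibniz attempt directly on $\mathcal{D}^\mathfrak{M}(u_r\partial_r f)$ produces $L^k u_r$ factors, which are neither of the $\bar{\na}^\mathfrak{L}u_r$ form demanded by the third sum nor obviously derivatives of the divergence combination. The algebraic identity $L(ru_r) = \partial_r u_r + u_r/r = -(\partial_\theta u_\theta/r + \partial_z u_z)$, arising from $\nabla\cdot\bl{u}=0$, is precisely the tool that peels off each successive $L$ from $u_r$ and repackages it into the second sum; only when no $L$ hits $u_r$ does the term remain of pure $\bar{\na}^\mathfrak{L}u_r$ type and land in the third sum. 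Careful bookkeeping is then needed to confirm the claimed multi-index ranges $1\le|\mathfrak{N}|\le|\mathfrak{M}|$, $0\le|\mathfrak{N}|\le|\mathfrak{M}|-1$, and $1\le|\mathfrak{L}|\le|\bar{\mathfrak{M}}|$ throughout the induction.
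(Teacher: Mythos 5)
Your proposal is correct, and it proves both identities by a route that is genuinely leaner than the paper's. You share with the paper the two essential ingredients: an induction that peels off one factor $\frac{\partial_r}{r}$ at a time, and the pointwise identity
\[
\Big[\frac{\partial_r}{r},\,u_r\partial_r\Big]f=\frac{1}{r}\Big(\partial_r u_r+\frac{u_r}{r}\Big)\partial_r f=-\Big(\frac{\partial_\theta u_\theta}{r}+\partial_z u_z\Big)\frac{\partial_r f}{r},
\]
where the divergence-free condition converts the otherwise unusable combination $\partial_r u_r+\frac{u_r}{r}$ into the good quantity of the second sum; this is exactly the paper's computation at the base step $\mathfrak{M}=(1,0,0)$. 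What you do differently is the surrounding organization, and it buys real economy. By noting that $\partial_z$, $\partial_r$ and $\frac{\partial_r}{r}$ are each derivations applied in a fixed order, you obtain the multi-index Leibniz formula for $\mathcal{D}^{\mathfrak{M}}$ on products once and for all (no commutativity between $\partial_r$ and $\frac{\partial_r}{r}$ is needed, since each factor retains the canonical ordering $\partial_z^{n_z}\partial_r^{n_r}(\frac{\partial_r}{r})^{n_c}$); this immediately gives \eqref{MMM0}, which the paper leaves unproved as the more transparent case, it settles the $u_z\partial_z$ commutator in one line, and it renders the paper's separate inductions on $m_r$ and on $m_z$ unnecessary. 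The paper instead runs a compound three-stage induction on $(m_c,m_r,m_z)$ treating $\bl{u}\cdot\bar{\nabla}$ as a whole, peeling the outermost $\frac{\partial_r}{r}$, whereas you run a single induction on $m_c$ peeling the innermost one and applying the hypothesis to $\frac{\partial_r f}{r}$; the transport identities such as $\mathcal{D}^{(m_c,m_r,m_z)-\mathfrak{N}}\big(\frac{\partial_r f}{r}\big)=\mathcal{D}^{(m_c+1,m_r,m_z)-\mathfrak{N}}f$ then keep the three families and the index ranges $1\leq|\mathfrak{N}|\leq|\mathfrak{M}|$, $0\leq|\mathfrak{N}|\leq|\mathfrak{M}|-1$, $1\leq|\mathfrak{L}|\leq|\bar{\mathfrak{M}}|$ exactly as claimed, with integer (binomial) coefficients throughout. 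Both arguments hinge on the same cancellation, so the difference is one of bookkeeping rather than substance, but your version is the more systematic of the two.
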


\begin{proof}
We only prove \eqref{MMM} since the first one is relatively more transparent. The proof is carried out with a compound induction on multi-index $\mathfrak{M}=(m_c,m_r,m_z)$. First we consider the case with $m_c=1$, $m_r=m_z=0$. Direct calculation shows
\[
\begin{split}
\big[\f{\p_r}{r}\,,\,\bl{u}\cdot\bar{\na}\big]f=&\frac{1}{r} \partial_r\bl{u}\cdot \bar{\nabla}f+\un{\frac{1}{r}\bl{u} \cdot \bar{\nabla}\p_rf}_{M_1}-\bl{u}\cdot\bar{\nabla}\frac{\p_rf}{r}\,.
\end{split}
\]
Noticing that
\[
M_1=\bl{u}\cd\bar{\na}\f{\p_rf}{r}+\f{u_r\p_rf}{r^2},
\]
one concludes
\be\label{al31}
\begin{split}
\big[\f{\p_r}{r}\,,\,\bl{u}\cdot\bar{\na}\big]f=&\Big(\partial_r u_r+\frac{u_r}{r}\Big) \frac{\partial_r f}{r}+\frac{\partial_r u_z}{r}\partial_z f=\frac{\partial_r u_z}{r}\partial_zf-\Big(\f{\p_\theta}{r}u_\theta+\p_zu_z\Big)\f{\p_rf}{r}\,.
\end{split}
\ee
Here we have applied the divergence property of $\bl{u}$ in the last equality. This shows the validity of \eqref{MMM} for $\mathfrak{M}=(1,0,0)$.

Given $m_c\in\mathbb{N}$, we \textbf{claim} the following identity holds:
\be\label{ASSm}
\begin{split}
\big[\big(\f{\p_r}{r}\big)^{m_c}\,,\,\bl{u}\cdot\bar{\na}\big]f=&\sum_{j=1}^{m_c}C_{1,j}\Big(\f{\p_r}{r}\Big)^ju_z\Big(\f{\p_r}{r}\Big)^{m_c-j}\p_zf\\
&+\sum_{j=0}^{m_c-1}C_{2,j}\Big(\f{\p_r}{r}\Big)^{j}\Big(\f{\p_\theta u_\theta}{r}+\p_zu_z\Big)\Big(\f{\p_r}{r}\Big)^{m_c-j}f\,.
\end{split}
\ee
Here and below, $C_{1,j}$ and $C_{2,j}$ are integers, which may change from line to line. The proof is carried out by induction. While case $m_c=1$ is already shown in \eqref{al31}, now we assume the validity of \eqref{ASSm}. By the direct calculation
\[
\begin{split}
\Big[\Big(\f{\p_r}{r}\Big)^{m_c+1}\,,\,\bl{u}\cdot\bar{\na}\Big]f=&\,\frac{\partial _r}{r}\bigg(\Big[\Big(\frac{\partial}{r}\Big)^{m_c}, \bl{u} \cdot \bar{\nabla}\Big]f+\bl{u} \cdot \bar{\nabla}\Big(\frac{\partial_r}{r}\Big)^{m_c}f\bigg)-\bl{u} \cdot \bar{\nabla}\Big(\frac{\partial_r}{r}\Big)^{m_c+1}f\\
=&\,\frac{\partial_r}{r}\Big[\Big(\frac{\partial}{r}\Big)^{m_c}, \bl{u} \cdot \bar{\nabla}\Big]f+\frac{\partial_r\bl{u}}{r} \cdot \bar{\nabla}\Big(\frac{\partial_r}{r}\Big)^{m_c}f+\un{\frac{1}{r} \bl{u}\cdot \bar{\nabla} \partial_r\Big(\frac{\partial_r}{r}\Big)^{m_c}f}_{M_2}\\
&\,-\bl{u} \cdot \bar{\nabla}\Big(\frac{\partial_r}{r}\Big)^{m_c+1}f\,.\\
\end{split}
\]
Since
\[
M_2=\bl{u}\cdot \bar{\nabla}\Big(\frac{\partial_r}{r}\Big)^{m_c+1} f+\frac{u_r}{r}\Big(\frac{\partial_r}{r}\Big)^{m_c+1} f,
\]
one deduces that by the divergence-free property of $\bl{u}$:
\[
\begin{split}
\Big[\Big(\f{\p_r}{r}\Big)^{m_c+1}\,,\,\bl{u}\cdot\bar{\na}\Big]f=&\frac{\partial_r}{r}\Big[\Big(\frac{\partial}{r}\Big)^{m_c}, \bl{u} \cdot \bar{\nabla}\Big]f+\Big(\p_ru_r+\f{u_r}{r}\Big)\Big(\frac{\partial_r}{r}\Big)^{m_c+1} f+\frac{\partial_ru_z}{r}\p_z\Big(\frac{\partial_r}{r}\Big)^{m_c}f\\
=&\frac{\partial_r}{r}\Big[\Big(\frac{\partial}{r}\Big)^{m_c}, \bl{u} \cdot \bar{\nabla}\Big]f+\frac{\partial_ru_z}{r}\p_z\Big(\frac{\partial_r}{r}\Big)^{m_c}f\\
&-\Big(\f{\p_\theta}{r}u_\theta+\p_zu_z\Big)\Big(\frac{\partial_r}{r}\Big)^{m_c+1} f\,.
\end{split}
\]
Substituting \eqref{ASSm} in the above equation, one arrives at
\[
\begin{split}
&\Big[\Big(\f{\p_r}{r}\Big)^{m_c+1}\,,\,\bl{u}\cdot\bar{\na}\Big]f\\
=&\sum_{j=1}^{m_c+1}C_{1,j}\Big(\f{\p_r}{r}\Big)^ju_z\Big(\f{\p_r}{r}\Big)^{m_c-j+1}\p_zf+\sum_{j=0}^{m_c}C_{2,j}\Big(\f{\p_r}{r}\Big)^{j}\Big(\f{\p_\theta u_\theta}{r}+\p_zu_z\Big)\Big(\f{\p_r}{r}\Big)^{m_c-j+1}f\\
&+\frac{\partial_ru_z}{r}\p_z\Big(\frac{\partial_r}{r}\Big)^{m_c}f-\Big(\f{\p_\theta}{r}u_\theta+\p_zu_z\Big)\Big(\frac{\partial_r}{r}\Big)^{m_c+1}f\\
=&\sum_{j=1}^{m_c+1}C_{1,j}\Big(\f{\p_r}{r}\Big)^ju_z\Big(\f{\p_r}{r}\Big)^{m_c-j+1}\p_zf+\sum_{j=0}^{m_c}C_{2,j}\Big(\f{\p_r}{r}\Big)^{j}\Big(\f{\p_\theta u_\theta}{r}+\p_zu_z\Big)\Big(\f{\p_r}{r}\Big)^{m_c-j+1}f\,.
\end{split}
\]
This implies the validity of the claim, and thus proves \eqref{MMM} for $\mathfrak{M}=(m_c,0,0)$, $\forall m_c\in\mathbb{N}$.

Now we carry out the induction of $m_r$.  We assume that
\be\label{ASSk}
\begin{split}
&\Big[\p_r^{m_r}\Big(\f{\p_r}{r}\Big)^{m_c}\,,\,\bl{u}\cdot\bar{\na}\Big]f\\
=\,&\p_r^{m_r}\Bigg\{\sum_{j=1}^{m_c+1}C_{1,j}\Big(\f{\p_r}{r}\Big)^ju_z\Big(\f{\p_r}{r}\Big)^{m_c-j+1}\p_zf+\sum_{j=0}^{m_c}C_{2,j}\Big(\f{\p_r}{r}\Big)^{j}\Big(\f{\p_\theta u_\theta}{r}+\p_zu_z\Big)\Big(\f{\p_r}{r}\Big)^{m_c-j+1}f\Bigg\}\\
&+\sum_{j=0}^{m_r-1}C_{m_r,j}\bigg(\partial_r^{j+1}\bl{u}\cdot \bar{\nabla} \partial_r^{m_r-1-j}\Big(\frac{\partial_r}{r}\Big)^{m_c}f\bigg)\,.\\
\end{split}
\ee
Here $C_{m_r,j}$ is a nonnegative integer and may change from line to line. Direct calculation shows that
\[
\begin{split}
\Big[\p_r^{m_r+1}\Big(\f{\p_r}{r}\Big)^{m_c}\,,\,\bl{u}\cdot\bar{\na}\Big]f=&\,\p_r\Big[\p_r^{m_r}\Big(\f{\p_r}{r}\Big)^{m_c}\,,\,\bl{u}\cdot\bar{\na}\Big]f+\p_r\Big(\bl{u}\cdot\bar{\na}\p_r^{m_r}\Big(\f{\p_r}{r}\Big)^{m_c}f\Big)\\
&-\bl{u}\cdot\bar{\na}\p_r^{m_r+1}\Big(\f{\p_r}{r}\Big)^{m_c}f\\
=&\,\p_r\Big[\p_r^{m_r}\Big(\f{\p_r}{r}\Big)^{m_c}\,,\,\bl{u}\cdot\bar{\na}\Big]f+\p_r\bl{u}\cdot\bar{\na}\p_r^{m_r}\Big(\f{\p_r}{r}\Big)^{m_c}f\,.
\end{split}
\]
Substituting the assumption \eqref{ASSk} in the above equation, one deduces
\be\label{ESE}
\begin{split}
&\Big[\p_r^{m_r+1}\Big(\f{\p_r}{r}\Big)^{m_c}\,,\,\bl{u}\cdot\bar{\na}\Big]f\\
=\,&\p_r^{m_r+1}\Bigg\{\sum_{j=1}^{m_c+1}C_{1,j}\Big(\f{\p_r}{r}\Big)^ju_z\Big(\f{\p_r}{r}\Big)^{m_c-j+1}\p_zf\Bigg.\\
&\hskip 2cm\Bigg.+\sum_{j=0}^{m_c}C_{2,j}\Big(\f{\p_r}{r}\Big)^{j}\Big(\f{\p_\theta u_\theta}{r}+\p_zu_z\Big)\Big(\f{\p_r}{r}\Big)^{m_c-j+1}f\Bigg\}\\
&+\sum_{j=0}^{m_r-1}C_{m_r,j}\p_r\bigg(\partial_r^{j+1}\bl{u}\cdot \bar{\nabla} \partial_r^{m_r-1-j}\Big(\frac{\partial_r}{r}\Big)^{m_c}f\bigg)+\p_r\bl{u}\cdot\bar{\na}\p_r^{m_r}\Big(\f{\p_r}{r}\Big)^{m_c}f\\
=&\p_r^{m_r+1}\Bigg\{\sum_{j=1}^{m_c+1}C_{1,j}\Big(\f{\p_r}{r}\Big)^ju_z\Big(\f{\p_r}{r}\Big)^{m_c-j+1}\p_zf\Bigg.\\
&\hskip 2cm\Bigg.+\sum_{j=0}^{m_c}C_{2,j}\Big(\f{\p_r}{r}\Big)^{j}\Big(\f{\p_\theta u_\theta}{r}+\p_zu_z\Big)\Big(\f{\p_r}{r}\Big)^{m_c-j+1}f\Bigg\}\\
&+\sum_{j=0}^{m_r}C_{m_r+1,j}\bigg(\partial_r^{j+1}\bl{u}\cdot \bar{\nabla} \partial_r^{m_r-1-j}\Big(\frac{\partial_r}{r}\Big)^{m_c}f\bigg)\,.\\
\end{split}
\ee
Recall the Leibniz formula, \eqref{ESE} indicates the validity of \eqref{MMM} for $\mathfrak{M}=(m_c,m_r,0)$, $\forall m_c,\,m_r\in\mathbb{N}$. Finally, an induction of $m_z$ indicates the validity of \eqref{MMM}. The proof is similar to the induction of $m_r$, thus we omit the details here.
\end{proof}

Given $3\leq m\in\mathbb{N}$ and $|\mathfrak{M}|=m$, acting $\mathcal{D}^\mathfrak{M}$ on \eqref{EHH}, one arrives at
\[
\p_t\mD^\mathfrak{M}\mH+\bl{v}\cdot\na\mD^\mathfrak{M}\mH-\mD^\mathfrak{M}\Big(\mH\f{\p_\theta v_\theta}{r}\Big)-2\mD^\mathfrak{M}(\mH\p_z\mH)=-[\mD^\mathfrak{M},\bl{v}\cd\bar{\na}]\mH.
\]
Multiplying the resulting equation with $\mD^\mathfrak{M}\mH$ and integrating over $\mR^3$, since $\mD^\mathfrak{M}\mH$ is independent with $\theta$, one has
\[
\int_{\bR^3}\bl{v}\cdot\na\mD^\mathfrak{M}\mH\cd\mD^\mathfrak{M}\mH dx=-\f{1}{2}\int_{\bR^3}\text{div}\,\bl{v}|\mD^\mathfrak{M}\mH|^2dx=0\,.
\]
Recall the aforementioned vanishing property \eqref{Van1}, we derive that
\[
\int_{\bR^3}\mD^\mathfrak{M}\mH\cdot\mD^\mathfrak{M}\Big(\mH\f{\p_\theta v_\theta}{r}\Big)dx=\int_{\bR^3}\f{\p}{\p\theta}\bigg(\mD^\mathfrak{M}\mH\cdot\mD^\mathfrak{M}\Big(\mH\f{v_\theta}{r}\Big)\bigg)dx=0\,.
\]
Thus one arrives at
\be\label{EH0}
\f{d}{dt}\|\mD^\mathfrak{M}\mH(t,\cd)\|_{L^2}^2\les \Big|\un{\int_{\mR^3}\mD^\mathfrak{M}(\mH\p_z\mH)\mD^\mathfrak{M}\mH dx}_{H_1}\Big|+\big|\un{\int_{\mR^3}[\mD^\mathfrak{M},\bl{v}\cdot\na]\mH\mD^\mathfrak{M}\mH dx}_{H_2}\Big|\,.
\ee
Here we split $H_1$ to
\[
H_1=\un{\int_{\mR^3}\mH\p_z\mD^\mathfrak{M}\mH\mD^\mathfrak{M}\mH dx}_{H_{11}}+\un{\int_{\mR^3}[\mD^\mathfrak{M},\mH\p_z]\mH\mD^\mathfrak{M}\mH dx}_{H_{12}}\,.
\]
Using integration by parts, $H_{11}$ follows that
\be\label{EH1}
|H_{11}|=\f{1}{2}\Big|\int_{\mR^3}\mH\p_z\big(\mD^\mathfrak{M}\mH\big)^2 dx\Big|=\f{1}{2}\Big|\int_{\mR^3}\p_z\mH|\mD^\mathfrak{M}\mH|^2dx\Big|\les\|\na\mH(t,\cd)\|_{L^\i}\|\mD^\mathfrak{M}\mH(t,\cd)\|_{L^2}^2\,.
\ee
By \eqref{MMM0} in Proposition \ref{PROP33} and the H\"older inequality, one deduces
\be\label{RCC}
\begin{split}
\|[\mD^\mathfrak{M},\mH\p_z]\mH(t,\cd)\|\les&\sum_{1\leq|\mathfrak{N}|\leq |\mathfrak{M}|}\|(\mathcal{D}^{\mathfrak{N}}\mH\mathcal{D}^{\mathfrak{M}-\mathfrak{N}}\p_z\mH)(t,\cd)\|_{L^2}\\
\les&\,\,\|\mH(t,\cd)\|_{H^m}\|\bar{\na}\mH(t,\cd)\|_{L^\i}\\
&+\sum_{2\leq|\mathfrak{N}|\leq |\mathfrak{M}|-1}\|\mathcal{D}^{\mathfrak{N}}\mH(t,\cd)\|_{L^4}\|\mathcal{D}^{\mathfrak{M}-\mathfrak{N}}\p_z\mH(t,\cd)\|_{L^4}\,.
\end{split}
\ee
Using the Sobolev imbedding, recalling $m=|\mathfrak{M}|\geq 3$, one notices
\[
\|\bar{\na}\mH(t,\cd)\|_{L^\i}\les\|\mH(t,\cd)\|_{H^m}\,.
\]
And since $2\leq|\mathfrak{N}|\leq m-1$, one has
\[
\begin{split}
\max\big\{\|\mathcal{D}^{\mathfrak{N}}\mH(t,\cd)\|_{L^4}\,,\,\|\mathcal{D}^{\mathfrak{M}-\mathfrak{N}}\p_z\mH(t,\cd)\|_{L^4}\big\}&\les\max\big\{\|\mathcal{D}^{\mathfrak{N}}\mH(t,\cd)\|_{H^1}\,,\,\|\mathcal{D}^{\mathfrak{M}-\mathfrak{N}}\p_z\mH(t,\cd)\|_{H^1}\big\}\\
&\les\|\mH(t,\cd)\|_{H^m}\,.
\end{split}
\]
Thus we arrive at
\[
\|[\mD^\mathfrak{M},\mH\p_z]\mH(t,\cd)\|_{L^2}\les\|\mH(t,\cd)\|_{H^m}^2\,,
\]
which indicates
\be\label{EH211}
|H_{12}|\les\|\mH(t,\cd)\|_{H^m}^3
\ee
by the Cauchy-Schwarz inequality. Combining \eqref{EH1} and \eqref{EH211}, one derives
\be\label{EH10}
|H_1|\les\|\mH(t,\cd)\|_{H^m}^3\,.
\ee

For $H_2$, using \eqref{MMM}, one derives
\[
\begin{split}
H_2=&\sum_{0\leq|\mathfrak{N}|\leq |\mathfrak{M}|-1}C_{3,\,\mathfrak{N}}\un{\int_{\mathbb{R}^3}\mathcal{D}^\mathfrak{N}\Big(\f{\p_\theta v_\theta}{r}\Big)\mathcal{D}^{\mathfrak{M}-\mathfrak{N}}\mH\mD^\mathfrak{M}\mH dx}_{H_{21}}\\
&+\sum_{1\leq|\mathfrak{N}|\leq |\mathfrak{M}|}C_{2,\,\mathfrak{N}}\un{\int_{\mathbb{R}^3}\mathcal{D}^{\mathfrak{N}}v_z\mathcal{D}^{\mathfrak{M}-\mathfrak{N}}\p_z\mH\mD^\mathfrak{M}\mH dx}_{H_{22}}\\
&+\sum_{0\leq|\mathfrak{N}|\leq |\mathfrak{M}|-1}C_{3,\,\mathfrak{N}}\un{\int_{\mathbb{R}^3}\mathcal{D}^\mathfrak{N}\p_zv_z\mathcal{D}^{\mathfrak{M}-\mathfrak{N}}\mH\mD^\mathfrak{M}\mH dx}_{H_{23}}\\
&+\sum_{1\leq|\mathfrak{L}|\leq |\bar{\mathfrak{M}}|}C_{4,\,\mathfrak{L}}\un{\int_{\mathbb{R}^3}\bar{\na}^{\mathfrak{L}}v_r\p_r\mathcal{D}^{\mathfrak{M}-(0,l_r,l_z)}\mH\mD^\mathfrak{M}\mH dx}_{H_{24}}\,.
\end{split}
\]
Here, since $\mD^\mathfrak{M}\mH$ is axially symmetric,
\be\label{VN}
\int_{\mathbb{R}^3}\mathcal{D}^\mathfrak{N}\Big(\f{\p_\theta v_\theta}{r}\Big)\mathcal{D}^{\mathfrak{M}-\mathfrak{N}}\mH\mD^\mathfrak{M}\mH dx=\int_{\mathbb{R}^3}\p_\theta\bigg\{\mathcal{D}^\mathfrak{N}\Big(\f{v_\theta}{r}\Big)\mathcal{D}^{\mathfrak{M}-\mathfrak{N}}\mH\mD^\mathfrak{M}\mH\bigg\} dx=0\,.
\ee
This means $H_{21}$ vanishes. By Corollary \ref{COR255}, one notices that for any multi-index $\mathfrak{P}$, there exists a scalar function $F_{\mathfrak{P}}(r,z,\theta)$, such that $\mD^\mathfrak{P}v_z+\p_\theta F_{\mathfrak{P}}(r,z,\theta)$ is a component of the gradient tensor $\na^{|\mathfrak{P}|}\bl{v}$. Akin to \eqref{VN}, one knows that the $\p_\theta F_{\mathfrak{P}}(r,z,\theta)$ part keeps silent when carrying out the $L^2$-based energy estimate. Also by Lemma \ref{LEMa}, $\bar{\na}^\mathfrak{L}v_r$ is a component of the gradient tensor $\na^{|\mathfrak{L}|}\bl{v}$. Therefore, recalling \eqref{RCC}-\eqref{EH211}, terms $H_{22}$-$H_{24}$ can be estimated in the following uniform way:
\be\label{EH3}
|H_2|\les\|\bl{v}(t,\cd)\|_{H^m}\|\mH(t,\cd)\|_{H^m}^2\,.
\ee

Combining estimates \eqref{EH10}, \eqref{EH3} of the right hand side of \eqref{EH0}, we deduce
\[
\f{d}{dt}\|\mD^\mathfrak{M}\mH(t,\cd)\|_{L^2}^2\les\|\mH(t,\cd)\|_{H^m}^3+\|\bl{v}(t,\cd)\|_{H^m}\|\mH(t,\cd)\|_{H^m}^2\,.
\]
Integrating with the temporal variable over $(0,t)$, and performing an interpolation with the fundamental energy bound in Lemma \ref{LEMFUND}, we arrive at the following estimate
\be\label{EMMH}
\|\mH(t,\cdot)\|^2_{{H}^m}\leq \|\mH_0\|^2_{{H}^m}+ C\int_0^t\|(\bl{v},\mH)(s,\cdot)\|^3_{{H}^m}ds.
\ee
Composing \eqref{ENVH} and \eqref{EMMH}, one concludes that
\[
\|(\bl{v},\bl{h},\mH)(t,\cdot)\|^2_{{H}^m}\leq \|(\bl{v}_0,\bl{h}_0,\mH_0)\|^2_{{H}^m}+ C\int_0^t\|(\bl{v},\bl{h},\mH)(s,\cdot)\|^3_{{H}^m}ds.
\]
This estimate implies a local-in-time bound for $\|(\bl{v},\bl{h},\mH)(t,\cdot)\|^2_{{H}^m}$, for any $m\geq3$ and $m\in\mathbb{N}$. That is, for some $T_*>0$ depending on $m$ and $\|(\bl{v}_0,\bl{h}_0,\mH_0)\|^2_{{H}^m}$ such that
\[
\|(\bl{v},\bl{h},\mH)(t,\cdot)\|^2_{{H}^m}\leq C_{0,T_*},\q\forall t\in[0,T_*],
\]
which obtains the local well-posedness of the system \eqref{(2.1)}.

\qed

\begin{remark}
There is an observation for the case of $m=1$. In fact, one may not able to find the trivialness of
\[
\int_{\bR^3}\mH\p_{x_i}\f{\p_\theta v_\theta}{r}\p_{x_i}\mH dx,\q\text{for } i=1,2,
\]
but their summation over $i=1,2$ do vanish. Here goes a short proof: Using the cylindrical coordinates and noticing $\mH$ is independent with $\theta$, direct calculation shows
\[
\begin{split}
&\sum_{i=1}^2\int_{\bR^3}\mH\p_{x_i}\f{\p_\theta v_\theta}{r}\p_{x_i}\mH dx=\f{1}{2}\sum_{i=1}^2\int_{\bR^3}\p_{x_i}\f{\p_\theta v_\theta}{r}\p_{x_i}\mH^2 dx=-\f{1}{2}\sum_{i=1}^2\int_{\bR^3}\f{\p_\theta v_\theta}{r}\p_{x_i}\p_{x_i}\mH^2 dx\\
=&-\f{1}{2}\int_{\bR^3}\f{\p_\theta v_\theta}{r}\Big(\p^2_{rr}+\f{1}{r}\p_r\Big)\mH^2dx=0\,.
\end{split}
\]
Here the last inequality follows from the observation \eqref{ENS} . However, this way seems far more complicated to go through when $m\geq2$. Thus it seems necessary, or at least convenient, to estimate higher order derivatives of the unknowns in the cylindrical coordinates, even if $\bl{v}$ is not axially symmetric.
\end{remark}

\qed

\section*{Acknowledgments}
\addcontentsline{toc}{section}{Acknowledgments}
\q\ The author wishes to thank the referees for many helpful comments. Part of this work was done when the author was visiting Department of Mathematics, Chung-Ang University, in the winter 2023. He is very grateful to Professor Dongho Chae for his support and helpful discussions on this topic during the visiting period. He also wishes to thank Professor Qi S. Zhang in UC Riverside for his valuable and profound comments on this work.

Z. Li is supported by the China Postdoctoral Science Foundation (No. 2024M763474) and National Natural Science Foundation of China (No. 12001285).

\medskip
\medskip

{\footnotesize

{\sc Z. Li: School of Mathematics and Statistics, Nanjing University of Information Science and Technology, Nanjing 210044, China, and Academy of Mathematics \& Systems Science, The Chinese Academy of Sciences,
Beijing 100190, China.}

  {\it E-mail address:}  zijinli@nuist.edu.cn

}
\end{document}